\documentclass[times]{my-iapress}


\usepackage{moreverb}
\usepackage[colorlinks,bookmarksopen,bookmarksnumbered,citecolor=red,urlcolor=red]{hyperref}
\usepackage{tikz}
\usepackage{subfigure}


\setcounter{page}{01}

\usepackage{amsmath,amssymb}
\usepackage{graphicx}


\newtheorem{theorem}{Theorem}
\newtheorem{lemma}{Lemma}


\newcommand{\R}{\mathbb{R}}

\allowdisplaybreaks


\begin{document}

\runningheads{Analysis of a SIRI epidemic model with distributed delay and relapse}{
A. Elazzouzi, A. Lamrani Alaoui, M. Tilioua and D. F. M. Torres}

\title{Analysis of a SIRI epidemic model with distributed delay and relapse}

\author{Abdelhai Elazzouzi\affil{1}\corrauth,
Abdesslem Lamrani Alaoui\affil{2},
Mouhcine Tilioua\affil{2},
Delfim F. M. Torres\affil{3}}

\address{\affilnum{1}Department of MPI, University Sidi Mouhamed Ben Abdellah,
FP Taza, LSI Laboratory, Morocco.
\affilnum{2}Department of Mathematics, University Moulay Isma\"{\i}l,
FST Errachidia, M2I Laboratory, MAMCS Group, Morocco.
\affilnum{3}Center for Research and Development in Mathematics and Applications (CIDMA),
Department of Mathematics, University of Aveiro, 3810-193 Aveiro, Portugal.}

\corraddr{A. Elazzouzi (Email: abdelhai.elazzouzi@usmba.ac.ma),
Sidi Mouhamed Ben Abdellah, FP Taza, LSI Laboratory, Morocco.}

\begin{abstract}
We investigate the global behaviour of a SIRI epidemic model
with distributed delay and relapse. From the theory
of functional differential equations with delay,
we prove that the solution of the system is unique,
bounded, and positive, for all time. The basic reproduction
number $R_{0}$ for the model is computed. By means of the direct
Lyapunov method and LaSalle invariance principle,
we prove that the disease free equilibrium is globally
asymptotically stable when $R_{0} < 1$. Moreover,
we show that there is a unique endemic equilibrium,
which is globally asymptotically stable,
when $R_{0} > 1$.
\end{abstract}

\keywords{Global stability, nonlinear incidence function,
distributed delay, Lyapunov functionals, relapse}

\maketitle

\noindent{\bf AMS 2010 subject classifications} 34D23, 92D30


\section{Introduction}

In recent years, great attention has been paid to the study of SIR type models,
which have been formulated to describe the propagation and evolution
of some human or animal diseases. In such models, the population
is subdivided into compartments or classes, in particular the compartment
of susceptible $(S)$, the compartment of infective $(I)$,
and the compartment of recovered individuals $(R)$
\cite{MR3703345,MR3810808}. When recovered individuals may experience
a relapse of the disease, due to an incomplete treatment or due to the reactivation
of a latent infection, and then re-enter the class of infective,
a SIRI model is more convenient to model the dynamic of the diseases.
Herpes, which can be transmitted by close physical or sexual contact,
tuberculosis, and malaria, are three epidemics that have been modeled
by SIRI systems \cite{MR3750672,Sampath,article,Xu,ZhangHong}.
Often, the transmission of the infection in the population is modelled
by an incidence function, which has taken many forms in the literature
\cite{enatsu2012global,Georgescu2013ALF,korobeinikov2004lyapunov}.
Most epidemiological models focus on an incidence function without delay
\cite{MyID:382}. They assume that infection occur instantaneously once there is
a contact between an infectious individual and a susceptible one.
On the other hand, some models incorporate an incidence function
with discrete or distributed delays to model latency
\cite{MR2921926,MR1218880,LI20123696}.

In the analysis of local or global asymptotic stability properties,
it is common to use Lyapunov's second method, also called the
direct method of Lyapunov. It is a robust tool that allows
to determine the stability of a system without explicitly integrating
the differential equation. For details, see, e.g.,
\cite{beretta2001global,korobeinikov2006lyapunov,%
lasalle1976stability,tagkey1961iii,yasuhiro2000global}.
Several works include relapse. In \cite{Tudor},
a constant population SIRI model
with the incidence function $S(t)I(t)$ is analysed.
In \cite{Moreira}, an extension of the model in \cite{Tudor} for herpes viral infections
is investigated. It is proved that both disease-free and endemic equilibria
of the model are globally asymptotically stable \cite{Moreira}. In \cite{Georgescu2013ALF},
a SIRI epidemic model with the incidence rate of infection $C(S)f(I)$ is studied.
Sufficient conditions for the local stability of the equilibria are given
by using Lyapunov's second method and, under suitable monotonicity conditions,
global stability is obtained. In \cite{vargas2014global},
the global stability of a SIRI model with constant recruitment,
disease-induced death, and bilinear incidence rate, is discussed.
Here, we consider a SIRI model with relapse, a distributed delay,
and a general non-linear incidence function. The direct method of Lyapunov
is used to prove global asymptotic stability for any steady state.

The paper is organized as follows. The mathematical model under consideration
is formulated in Section~\ref{sec:02}. In Section~\ref{sec:03},
we establish its well-posedness.
More precisely, we prove positivity and boundedness of the solution
(Theorem~\ref{thm:pos:bound}).
The basic reproduction number and the disease-free equilibrium $E_0$
are also determined (Theorem~\ref{thm:R0}). In Section~\ref{sec:04},
we provide a mathematical analysis of the model.
In particular, the global stability of the
disease-free equilibrium and the global stability of the endemic
equilibrium are shown (Theorems~\ref{theorem1} and \ref{thm:Stab:EE}).
Two numerical examples with an incidence
function satisfying the assumptions considered in the previous sections
are given in Section~\ref{sec:05}. We finish the paper with
Section~\ref{sec:06} of concluding remarks
and some perspectives for future research.


\section{The mathematical model}
\label{sec:02}

We consider a general SIRI epidemic model with distributed delay and relapse.
The flow diagram of the disease transmission is given in Figure~\ref{figure0},
\begin{figure}[ht!]
\centering{
\tikzstyle{transition}=[rectangle, draw=blue!50,fill=red!20,thick,inner sep=0pt,minimum size=8mm]
\begin{tikzpicture}
\node [ draw , shape = rectangle ] (A) at (1.5, 0) [transition] {$s$};
\node [draw , shape = rectangle ] (B) at (7, 0) [transition] {$i$};
\node [draw , shape = rectangle ] (C) at (3.8, -2) [transition] {$r$};
\draw (A) edge [ dashed , ->] (B);
\draw (B) edge [ dashed , ->] (C);
\draw (B) edge [ dashed , ->] (7,1.5);
\draw (B) edge [ dashed , ->] (8.5,0);
\draw[->] (0,0) -- (A);
\draw[->] (A) -- (1.5,-1.5);
\draw[->] (C) -- (3.8,-3.5);
\draw[->] (C) -- (7,-2)--(B);
\path (A) edge node[anchor=south,above]{$\beta \displaystyle \int_0^{h}g(\tau)f(s(t),i(t-\tau))d\tau$}
(B);
\path (0,0) edge node[anchor=south,above]{$\Lambda$}
(A);
\path (B) edge node[anchor=south,left]{$\mu i$}
(7,1.5);
\path (B) edge  node[anchor=south,above]{$c i$}
(8.5,0);
\path (B) edge  node[anchor=south,left]{$\gamma i$}
(C);
\path (7,-2) edge   node[anchor=sought,right]{$ \delta r$}
(B);
\path (A) edge   node[anchor=sought,right]{$ \mu s$}
(1.5,-1.5);
\path (C) 	edge   node[anchor=sought,right]{$ \mu r$}
(3.8,-3.5);
\end{tikzpicture}
}
\caption{Flow diagram of the disease transmission.}	
\label{figure0}
\end{figure}
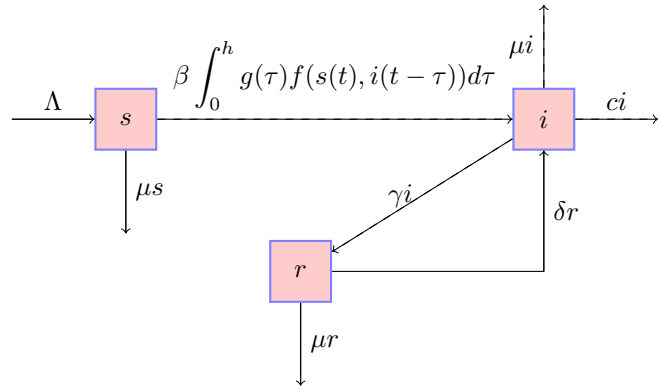
which corresponds to the dynamics described
by the system of equations \eqref{Model}:
\begin{eqnarray}
\label{Model}
\left\{
\begin{array}{ll}
\dfrac{ds(t)}{dt}
=\Lambda- \mu s(t)- \beta
\displaystyle \int_0^{h}g(\tau)f(s(t),i(t-\tau))d\tau,\vspace{.1cm}\\
\dfrac{di(t)}{dt}= \beta \displaystyle \int_0^{h}g(\tau)
f\left(s(t),i(t-\tau)\right)d\tau
-(\mu+c+\gamma)i(t) + \delta r(t), \vspace{.1cm}\\
\dfrac{dr(t)}{dt}=\gamma i(t)- (\mu + \delta) r(t),
\end{array}
\right.
\end{eqnarray}
where $s(t)$, $i(t)$, and $r(t)$ denote, respectively,
the number of susceptible, infective, and recovered 
individuals at time $t$. The parameters of model
\eqref{Model} are summarized in Table~\ref{table:01}.
\begin{table}[!ht]
\caption{Description of the parameters of the SIRI model \eqref{Model}.}
\label{table:01}
\centering{
\begin{tabular}{c|l} \hline
Parameter & Biological meaning\\ \hline
$\Lambda$& the population recruitment rate\\
$\mu$ & the population natural death rate\\
$\gamma$ & the natural recovery rate of infective individuals\\
$c$ & the population death rate caused by infection\\
$\beta$ &  the transmission coefficient\\
$\delta$ & the relapse rate\\ \hline
\end{tabular}}
\end{table}
Individuals leave the susceptible class at a rate
$$
\int_0^{h}g(\tau) f\left(s(t),i(t-\tau)\right) d\tau,
$$
where $h$ represents the maximum time taken to become infectious 
and $g$ denotes the fraction of vector population in which the time 
taken to become infectious is $\tau$ (that is, the incubation period distribution), 
which is assumed to be a non-negative continuous function on $[0, h]$. 
Moreover, and without loss of generality, we assume that
$$
\int_0^{h}g(\tau)d\tau = 1.
$$
Otherwise, we consider 
$$
\underline{g}=\dfrac{g}{\displaystyle\int_0^{h}g(\tau)d\tau} 
\quad \text{and} \quad 
\underline{\beta}=\beta\displaystyle\int_0^{h}g(\tau)d\tau
$$ 
instead of $g$ and $\beta$, respectively.
The initial conditions for system \eqref{Model} 
are given for $\theta \in[-h,0] $ by
$$
s( \theta ) = \Phi_1 ( \theta ), \quad
i( \theta ) = \Phi_2 ( \theta ), \quad
r( \theta ) = \Phi_3 ( \theta )
$$
with $\Phi = \left(\Phi_1, \Phi_2, \Phi_3\right) \in~ C^{+} \times~ C^{+}\times~ C^{+}$,
where $C^{+} = C([-h,0],\mathbb{R}^{+})$  is the non-negative cone of $C$.
Here, $C:= C([-h,0],\mathbb{R})$ denotes the space of continuous functions mapping
$[-h,0]$ into $\mathbb{R}$, equipped with the sup-norm.
Let $s(0) = s_0 > 0$, $i(0) = i_0>0$, and $r(0) = r_0 >0$.
Our main objective is to investigate the global stability of the SIRI model \eqref{Model}.
For that, we construct suitable Lyapunov functionals.


\section{The well-posedness of the model and its basic reproduction number}
\label{sec:03}

Let $f:\R^2_+ \rightarrow \R_+$ be a continuously differentiable function
in the interior of $\R^2_+$, satisfying
the following hypotheses:
\begin{description}
\item[$(H_1)$] $ f(s,i) $ is a strictly monotone increasing
function of $s \geqslant 0$ for any fixed $ i > 0$
and a monotone increasing function of $ i > 0 $
for any fixed $ s \geqslant 0$;

\item[$(H_2)$] $\phi(s, i) = \dfrac{f(s,i )}{i}$
is a bounded and monotone decreasing function of $i > 0$
for any fixed $s \geqslant 0$;

\item[$(H_3)$] $f(0, i) = f(s, 0) = 0$ for all $s, i\geqslant 0$.
\end{description}
Then, the right side of \eqref{Model} is locally Lipschizian
and we get, from the classical theory of ODEs with delay,
local existence and uniqueness of solution for \eqref{Model},
i.e., existence and uniqueness for all $ t \in [ 0, \delta ]$
for some $\delta \geq 0$ \cite{hale,hale2}. It is also easy
to see that system \eqref{Model} has always a disease-free equilibrium
\begin{equation}
\label{eq:DFE}
E_{0} = \left(\dfrac{\Lambda}{\mu}, 0, 0\right).
\end{equation}
We begin by proving that our model \eqref{Model}
is not only mathematically but also biologically well-posed:
all feasible solutions of system \eqref{Model}
are bounded and positive.

\begin{theorem}
\label{thm:pos:bound}
Let $\left(s(t),i(t),r(t)\right)$ be any solution of model
\eqref{Model} with positive initial condition $(s_0,i_0,r_0)$. Then,
\begin{enumerate}
\item every solution of \eqref{Model} starting from
$(s_0,i_0,r_0)$ remains positive for all $t\geq 0$,

\item the set
$$
\Omega = \left\{(s,i,r)\in \R^3 : \, s>0, \, i>0,
\, r>0, \, s+i+r\leq \dfrac{\Lambda}{\mu}\right\}
$$
is positively invariant with respect to system \eqref{Model}.
\end{enumerate}
\end{theorem}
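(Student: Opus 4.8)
The plan is to lean on the local existence and uniqueness already recorded above, to obtain positivity by a first--exit--time argument, and to obtain boundedness (hence global existence and the invariance of $\Omega$) from an elementary differential inequality for the total population $N:=s+i+r$.

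\emph{Positivity.} Let $[0,T)$ be the maximal interval of existence of the solution emanating from $\Phi$, and put
$$
t^{\ast}=\sup\bigl\{\, t\in[0,T) \;:\; s(\xi)>0,\ i(\xi)>0,\ r(\xi)>0 \text{ for all } \xi\in[0,t] \,\bigr\}.
$$
By continuity and $s_0,i_0,r_0>0$ we have $t^{\ast}>0$. Suppose, for contradiction, that $t^{\ast}<T$. Then all three components are $\geq 0$ on $[0,t^{\ast}]$, at least one of $s(t^{\ast}),i(t^{\ast}),r(t^{\ast})$ vanishes, and, since $\Phi\in C^{+}\times C^{+}\times C^{+}$, also $i(\theta)\geq 0$ for $\theta\in[-h,0]$. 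I would then rule out each possibility in turn. If $s(t^{\ast})=0$, then $(H_3)$ gives $f\bigl(0,i(t^{\ast}-\tau)\bigr)=0$ for every $\tau$, so the first equation of \eqref{Model} yields $\frac{ds}{dt}(t^{\ast})=\Lambda>0$, contradicting $s\geq 0$ on $[0,t^{\ast}]$ with $s(t^{\ast})=0$; (equivalently, since $f(0,\cdot)\equiv 0$ and $f$ is $C^{1}$ one may write $f(s,i)=s\,\widehat f(s,i)$ with $\widehat f$ continuous and read positivity of $s$ off the integrating--factor form of the $s$--equation). If $r(t^{\ast})=0$, integrating the third equation of \eqref{Model} gives $r(t^{\ast})=e^{-(\mu+\delta)t^{\ast}}\bigl(r_0+\gamma\int_0^{t^{\ast}}e^{(\mu+\delta)\xi}i(\xi)\,d\xi\bigr)\geq r_0\,e^{-(\mu+\delta)t^{\ast}}>0$, using $i\geq 0$ on $[0,t^{\ast}]$ — again a contradiction. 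Finally, if $i(t^{\ast})=0$, note that for $t\in[0,t^{\ast}]$ and $\tau\in[0,h]$ one has $t-\tau\in[-h,t^{\ast}]$, so $i(t-\tau)\geq 0$ (by the initial data when $t-\tau<0$, by the definition of $t^{\ast}$ otherwise); since $s(t)\geq 0$, $r(t)\geq 0$ and $f$ takes values in $\R_+$, the second equation of \eqref{Model} gives $\frac{di}{dt}\geq-(\mu+c+\gamma)\,i(t)$ on $[0,t^{\ast}]$, whence $i(t)\geq i_0\,e^{-(\mu+c+\gamma)t}>0$ and in particular $i(t^{\ast})>0$, a contradiction. Therefore $t^{\ast}=T$ and $s,i,r>0$ on the whole interval $[0,T)$.

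\emph{Boundedness, global existence, invariance.} Adding the three equations of \eqref{Model}, the delayed incidence terms cancel and, with $N=s+i+r$,
$$
\frac{dN}{dt}=\Lambda-\mu N-c\,i\leq\Lambda-\mu N ,
$$
so a comparison argument gives $N(t)\leq\max\{N(0),\Lambda/\mu\}$ for all $t\in[0,T)$; in particular $s,i,r$ are bounded on $[0,T)$, and the continuation theorem for functional differential equations forces $T=+\infty$. This proves the first assertion. For the second, if $(s_0,i_0,r_0)\in\Omega$ then $N(0)\leq\Lambda/\mu$, hence $N(t)\leq\Lambda/\mu$ for all $t\geq 0$; combined with the strict positivity of $s,i,r$ this yields $(s(t),i(t),r(t))\in\Omega$ for every $t\geq 0$, i.e., $\Omega$ is positively invariant.

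The only genuinely delicate point is the positivity part: because of the distributed delay one must choose the exit time $t^{\ast}$ with care and verify that the delayed states $i(t-\tau)$ appearing inside $f$ remain non-negative on $[0,t^{\ast}]$ — which is precisely what the assumption $\Phi\in C^{+}\times C^{+}\times C^{+}$ together with the definition of $t^{\ast}$ provides. Once this is in place, the structural observation that the delay terms cancel in the equation for $N$ makes boundedness, global existence, and the invariance of $\Omega$ entirely routine.
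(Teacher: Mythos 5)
Your proof is correct, and it follows the same overall architecture as the paper's (a first--exit--time contradiction for positivity, then the differential inequality $\frac{dN}{dt}\leq\Lambda-\mu N$ with a comparison argument for boundedness and invariance), but the mechanism you use to rule out vanishing of $s$ and $i$ is genuinely different. The paper sets $t_1=\min\{t: s(t)i(t)=0\}$ and, for the case $s(t_1)=0$, bounds the per-capita growth rate from below by $A=\min_{0\leq t\leq t_1}\bigl\{\Lambda/s(t)-\mu-\beta\int_0^h g(\tau)f(s(t),i(t-\tau))/s(t)\,d\tau\bigr\}$ to conclude $s(t_1)\geq s(0)e^{At_1}>0$; this requires knowing that the quantity being minimized stays bounded below as $s(t)\to 0$ (the term $\Lambda/s(t)$ blows up to $+\infty$, which helps, but $f(s,i)/s$ near $s=0$ needs the differentiability of $f$ to be controlled, and the ``minimum over $[0,t_1]$'' is formally undefined at $t_1$ itself). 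Your route avoids this entirely: for $s$ you evaluate the right-hand side of the equation at the exit time and use $(H_3)$ to get $s'(t^{\ast})=\Lambda>0$ against the sign constraint $s'(t^{\ast})\leq 0$, and for $i$ you use a clean Gronwall bound $i'\geq-(\mu+c+\gamma)i$, which only needs $f\geq 0$ and the non-negativity of the delayed states that you explicitly track. You also correctly deduce $N(t)\leq\Lambda/\mu$ for all $t\geq 0$ whenever $N(0)\leq\Lambda/\mu$, which is the statement actually needed for positive invariance of $\Omega$; the paper only records the asymptotic bound ``for $t$ going to $+\infty$.'' One very minor caveat: your parenthetical factorization $f(s,i)=s\,\widehat f(s,i)$ presumes differentiability of $f$ up to the boundary $s=0$, which the paper only assumes in the interior of $\R^2_+$ — but since this is offered only as an alternative to your primary (and sufficient) argument, it does not affect correctness.
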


\begin{proof}
Assume, by contradiction, that the first item of our result is false.
Let $t_1 = \min\{t : s(t)i(t) =0 \}$. Assume that $ s(t_1) =0$,
which implies that $i(t) \geq 0$ for all $0 \leq t \leq t_1 $. Let
\begin{eqnarray*}
A= \min_{ 0 \leq t \leq t_1}  \left\{ \dfrac{\Lambda}{s(t)}
- \mu - \beta \displaystyle\int_0^{h}
g(\tau) \dfrac{f(s(t),i(t-\tau))}{s(t)}d\tau \right\}.
\end{eqnarray*}
It follows that $\dfrac{d}{dt}s(t) \geq A s(t)$. Therefore,
$s(t_1) \geq s(0) \exp(At_1) > 0$. This contradicts
$ s(t_1) = 0$. With a similar argument, we see that $ i(t_1) = 0$
is a contradiction. This proves that $ s(t) > 0 $ and $ i(t) > 0 $
for all $ t \geq 0 $. On the other hand, from the third equation of
\eqref{Model}, one has $\frac{d}{dt}r(t) \geq - (\mu+\delta) r(t)$,
which implies $$r(t) \geq r(0) \exp (- (\mu+\delta) t) > 0.$$
We have just proved that any solution $(s(t),i(t),r(t))$ is positive.
Now, let $N(t)=s(t)+i(t) +r(t)$. Then,
\begin{eqnarray*}
\displaystyle\dfrac{d}{dt}N(t)= \Lambda- \mu N(t)-ci(t).
\end{eqnarray*}
It follows that
\begin{eqnarray*}
\displaystyle\dfrac{d}{dt}N(t)\leq \Lambda-\mu N(t).
\end{eqnarray*}
Now, let $V$ be the unique solution of the initial value problem
\begin{equation*}
\left\{
\begin{array}
[c]{l}
\displaystyle\dfrac{d}{dt}V(t)= \Lambda - \mu V(t)\text{ \ \ for } t>0,\\[0.3cm]
V(0)=N(0).
\end{array}
\right.
\end{equation*}
Then, $V$ is given by
$$
V(t)= \dfrac{\Lambda}{\mu} (1- \exp(-\mu t)) + N(0) \exp(-\mu t)
$$
and, by the comparison theorem (see Theorem~\ref{thm:05:App} in Appendix), it follows that
\begin{eqnarray*}
N(t)\leq \dfrac{\Lambda}{\mu}\left(1-\exp(-\mu t)\right)+N(0) \exp\left(-\mu t\right).
\end{eqnarray*}
This implies that the solution is bounded and, by the blow-up phenomena,
the solution exists and is defined for all $t\geq0$. Moreover,
for $t$ going to $+\infty$, we have
$$
0 \leq N(t)\leq \dfrac{\Lambda}{ \mu}.
$$
Since the solution is positive and bounded, we conclude
that $\Omega$ is positively invariant with respect to \eqref{Model}.
\end{proof}

\begin{theorem}
\label{thm:R0}
The basic reproduction number of model \eqref{Model} is given by
\begin{eqnarray*}
R_{0}=\dfrac{\beta(\mu+\delta) \, \partial_2 f(E_0)}{(\mu+\delta)(\mu+c+\gamma)-\delta\gamma},
\end{eqnarray*}
where $\partial_2 f(E_0)$ denotes the partial derivative of $f$ with respect to its second
argument $i$ at the disease-free equilibrium $E_0$ given by \eqref{eq:DFE}.
\end{theorem}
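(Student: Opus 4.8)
The plan is to compute $R_0$ through the next-generation matrix method of van den Driessche and Watmough. The infected compartments are $i$ and $r$, since both vanish at the disease-free equilibrium $E_0=(\Lambda/\mu,0,0)$ of \eqref{eq:DFE}. First I would split the $(i,r)$-subsystem as $\dot{x}=\mathcal{F}(x)-\mathcal{V}(x)$ with
\[
\mathcal{F}=\begin{pmatrix}\beta\displaystyle\int_0^{h}g(\tau)f(s(t),i(t-\tau))\,d\tau\\[2mm]0\end{pmatrix},
\qquad
\mathcal{V}=\begin{pmatrix}(\mu+c+\gamma)i-\delta r\\[1mm]-\gamma i+(\mu+\delta)r\end{pmatrix},
\]
where $\mathcal{F}$ gathers the terms that represent genuinely new infections and $\mathcal{V}$ the remaining transfer terms (including relapse, which is a transfer, not a new infection).

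Next I would linearize at $E_0$. By $(H_3)$ we have $f(s,0)=0$ for every $s\ge 0$, hence $\partial_1 f(E_0)=0$, so in the Jacobian of $\mathcal{F}$ only the $i$-direction survives; and since $\int_0^{h}g(\tau)\,d\tau=1$, averaging the distributed delay contributes a factor $1$ (equivalently, the kernel's Laplace transform $\int_0^{h}g(\tau)e^{-\lambda\tau}\,d\tau$ equals $1$ at $\lambda=0$, the value relevant for the threshold). This gives the matrices
\[
F=\begin{pmatrix}\beta\,\partial_2 f(E_0)&0\\0&0\end{pmatrix},
\qquad
V=\begin{pmatrix}\mu+c+\gamma&-\delta\\-\gamma&\mu+\delta\end{pmatrix}.
\]
I would then verify that $V$ is a nonsingular M-matrix: its off-diagonal entries are nonpositive and $\det V=(\mu+\delta)(\mu+c+\gamma)-\delta\gamma=\mu(\mu+c+\gamma)+\delta(\mu+c)>0$, so $V^{-1}\ge 0$ exists and all structural hypotheses of the van den Driessche--Watmough theorem hold.

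Finally, with
\[
V^{-1}=\frac{1}{\det V}\begin{pmatrix}\mu+\delta&\delta\\\gamma&\mu+c+\gamma\end{pmatrix},
\]
the product $FV^{-1}$ has rank one, so its spectral radius equals its trace, namely $\beta(\mu+\delta)\,\partial_2 f(E_0)/\det V$. Therefore $R_0=\rho(FV^{-1})$ is exactly the stated expression. The step I expect to demand the most care is justifying how the distributed delay enters the construction, i.e.\ that replacing $i(t-\tau)$ by $i$ inside the integral (or, equivalently, using $\int_0^{h}g=1$) is legitimate for the threshold computation; this can be argued either via the interpretation of $R_0$ as the expected number of secondary infections generated over the infectious period by one newly infected individual in an otherwise susceptible population, or by writing down the characteristic equation of the linearization at $E_0$ and checking that $\lambda=0$ is a root exactly when $R_0=1$. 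The rest is a routine $2\times 2$ computation.
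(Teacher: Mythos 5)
Your proposal follows essentially the same route as the paper: the van den Driessche--Watmough next-generation construction with the identical splitting into $\mathcal{F}$ (new infections) and $\mathcal{V}$ (transfers, with relapse placed in $\mathcal{V}$), the same $F$ and $V$, and the spectral radius of $FV^{-1}$ read off from a rank-one $2\times 2$ matrix. Your version is in fact slightly more careful than the paper's: your $V$ carries the correct sign $-\delta$ in the $(1,2)$ entry and your $V^{-1}$ has $\mu+c+\gamma$ in the $(2,2)$ entry (the paper's displayed matrices contain sign/typo slips there, though its determinant and final answer are consistent with your corrected matrices), and your remarks on the M-matrix property and on why $\int_0^h g(\tau)\,d\tau=1$ disposes of the distributed delay are sound additions.
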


\begin{proof}
We obtain the basic reproduction number by means of
the next generation method as given in \cite{VANDENDRIESSCHE200229}.
Let $x=(i,r,s)$. Then, it follows from system \eqref{Model} that
\begin{eqnarray*}
\dfrac{dx}{dt}= \mathcal{F}- \mathcal{ \nu},
\end{eqnarray*}
where
$$
\mathcal{F}=
\left(
\begin{matrix}
\beta \displaystyle \int_0^{h}g(\tau)f(s(t),i(t-\tau))d\tau\\[2ex]
0\\[2ex]
0
\end{matrix}
\right)
$$
and
$$
\mathcal{\nu}=
\left(
\begin{matrix}
(\mu +c+\gamma)i(t)- \delta r(t)\\[2ex]
-\gamma i(t)+ (\mu+\delta)r(t)\\[2ex]
\beta \displaystyle \int_0^{h}g(\tau)f(s(t),i(t-\tau))d\tau - \Lambda +\mu s(t)
\end{matrix}
\right).
$$
At the disease-free equilibrium $E_0$,
\begin{equation*}
D\mathcal{F}(E_0)
= \left(
\begin{matrix}
F&O_{2,1}\\[2ex]
O_{1,2}&0
\end{matrix}
\right),\quad
D\mathcal{\nu}(E_0)
=\left(
\begin{matrix}
V&O_{2,1}\\[2ex]
J_{1}&J_{2}
\end{matrix}
\right),
\end{equation*}
where the infection matrix $F$ and the transition matrix $V$ are given by
$$
F=
\left(
\begin{matrix}
\beta \partial_2 f(E_0)&0\\[2ex]
0&0
\end{matrix}
\right)
\quad
\text{ and }
\quad
V= \left(
\begin{matrix}
\mu +c+\gamma &\delta \\[2ex]
-\gamma &\mu +\delta
\end{matrix}
\right).
$$
The inverse of $V$ is
\begin{center}
$V^{-1}= \displaystyle \frac{1}{ (\mu+\delta)(\mu+c+\gamma)-\delta\gamma}$
$
\left(
\begin{matrix}	
\mu + \delta & \delta \\[2ex]
\gamma & \mu+ c +\delta
\end{matrix}
\right).
$
\end{center}
Thus, the next generation matrix for system \eqref{Model} is
$$
FV^{-1} =\displaystyle \frac{\beta \partial_2 f(E_0)}{ 
(\mu+\delta)(\mu+c+\gamma)-\delta\gamma}
\left(
\begin{matrix}
\mu + \delta & - \delta \\[2ex]
0& 0
\end{matrix}
\right).
$$
The basic reproduction number $R_{0}$ is the spectral radius
of the matrix $ FV^{-1}$, and the result follows.
\end{proof}


\section{Analysis of the model}
\label{sec:04}

In this section, we prove that there exists
a unique endemic equilibrium when the basic
reproduction number given by Theorem~\ref{thm:R0}
is greater than one (Lemma~\ref{Lemma:EE}),
and we obtain conditions for which the disease-free
equilibrium and the endemic equilibrium
are globally asymptotically stable
(Theorems~\ref{theorem1} and \ref{thm:Stab:EE}, respectively).


\subsection{Existence of an endemic equilibrium }

In this section, we establish existence and uniqueness of an endemic equilibrium.

\begin{lemma}
\label{Lemma:EE}
If  $ R_0 > 1 $, then system \eqref{Model} admits a unique
endemic equilibrium $ E^* = (s^*, i^*,r^*)$.
\end{lemma}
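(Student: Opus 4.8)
The plan is to reduce the equilibrium conditions to a single scalar equation in $s^*$ and then combine the intermediate value theorem with a monotonicity argument. Setting the right-hand sides of \eqref{Model} to zero at a constant state $(s^*,i^*,r^*)$ — so the distributed-delay term collapses to $\beta f(s^*,i^*)$, since $\int_0^h g(\tau)\,d\tau=1$ — the third equation gives $r^*=\gamma i^*/(\mu+\delta)$; substituting into the second gives $\beta f(s^*,i^*)=m\,i^*$ with
$$
m:=\frac{(\mu+\delta)(\mu+c+\gamma)-\delta\gamma}{\mu+\delta}=(\mu+c)+\frac{\mu\gamma}{\mu+\delta}>0 ;
$$
and the first equation gives $\Lambda-\mu s^*=\beta f(s^*,i^*)=m\,i^*$. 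Hence an endemic equilibrium must satisfy $s^*\in(0,\Lambda/\mu)$, $i^*=i(s^*):=(\Lambda-\mu s^*)/m>0$, and $r^*=\gamma i^*/(\mu+\delta)>0$; conversely, any $s^*\in(0,\Lambda/\mu)$ with $\beta f(s^*,i(s^*))=m\,i(s^*)$ yields such an equilibrium with all components positive.

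Next I would use $\phi(s,i)=f(s,i)/i$ from $(H_2)$ to rewrite the remaining equation: since $i(s^*)>0$, it is equivalent to $H(s^*)=m/\beta$, where $H(s):=\phi(s,i(s))$ on $(0,\Lambda/\mu)$. I would then show $H$ is continuous and strictly increasing: $i(\cdot)$ is strictly decreasing, $\phi(\cdot,i)$ is strictly increasing in $s$ by $(H_1)$, and $\phi(s,\cdot)$ is non-increasing in $i$ by $(H_2)$, so for $s_1<s_2$ one has $H(s_1)=\phi(s_1,i(s_1))<\phi(s_2,i(s_1))\le\phi(s_2,i(s_2))=H(s_2)$.

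Then I would evaluate the endpoint behaviour of $H$. As $s\to0^+$, $i(s)\to\Lambda/m$ and, by continuity of $f$ and $(H_3)$, $H(s)\to f(0,\Lambda/m)/(\Lambda/m)=0$. As $s\to(\Lambda/\mu)^-$, $i(s)\to0^+$; since $f(s,0)=0$ and $\phi(s,i)=f(s,i)/i$ is bounded and monotone in $i$, $\lim_{i\to0^+}\phi(s,i)=\partial_2 f(s,0)$, and a sandwich using monotonicity in both variables gives $H(s)\to\partial_2 f(\Lambda/\mu,0)=\partial_2 f(E_0)$. By the expression for $R_0$ in Theorem~\ref{thm:R0}, $R_0=\beta\,\partial_2 f(E_0)/m$, so the hypothesis $R_0>1$ reads precisely $\partial_2 f(E_0)>m/\beta$. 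Therefore $H$ increases continuously and strictly from a value $<m/\beta$ near $0$ to a value $>m/\beta$ near $\Lambda/\mu$, and the intermediate value theorem produces a unique $s^*\in(0,\Lambda/\mu)$ with $H(s^*)=m/\beta$; setting $i^*=i(s^*)$ and $r^*=\gamma i^*/(\mu+\delta)$ completes the construction.

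I expect the only genuinely delicate point to be the boundary identification $\lim_{s\to(\Lambda/\mu)^-}H(s)=\partial_2 f(E_0)$, i.e. showing that $\phi(s,i)\to\partial_2 f(s,0)$ as $i\to0^+$ in a way compatible with letting $s\to\Lambda/\mu$ simultaneously; this relies on $(H_3)$, the differentiability of $f$, and a monotone-convergence argument using that $\phi$ is monotone in each argument. Everything else — the elimination of $r^*$ and $i^*$, the monotonicity of $H$, and the intermediate value step, with uniqueness free from strict monotonicity — is routine.
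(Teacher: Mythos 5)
Your proposal is correct and follows essentially the same route as the paper: both reduce the equilibrium conditions to a single scalar equation along the line $\Lambda-\mu s^{*}=\bigl((\mu+\delta)(\mu+c+\gamma)-\delta\gamma\bigr)i^{*}/(\mu+\delta)$ and combine the intermediate value theorem (with the limit at the disease-free end identified via $R_{0}$) with monotonicity from $(H_1)$--$(H_2)$ for uniqueness. The only difference is that you parametrize by $s^{*}$ where the paper parametrizes by $i^{*}$, and your monotonicity argument is if anything slightly more carefully decomposed than the paper's.
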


\begin{proof}
We look for the solutions $\left(s^*, i^*,r^*\right)$ of equations
$\dfrac{ds}{dt} = 0$, $\dfrac{di}{dt} = 0$, and $\dfrac{dr}{dt} = 0$.
First note that $ \dfrac{ds}{dt} + \dfrac{di}{dt} =0 $ implies
\begin{eqnarray*}
\Lambda - \mu s^* -(\mu+c+\gamma)i^*+ \dfrac{\delta\gamma}{\mu+\delta}i^* = 0
\end{eqnarray*}
and so
\begin{eqnarray*}
s^*=~\dfrac{\Lambda}{\mu}-\dfrac{(\mu+c+\gamma)-\dfrac{\delta\gamma}{\mu+\delta}}{\mu}i^*.
 \end{eqnarray*}
Let $H$ be the function defined from $\mathbb{R}^+$ to $\mathbb{R}$ by
\begin{eqnarray*}
H(i)=\beta \dfrac{f\left(\dfrac{\Lambda}{\mu}-\dfrac{(\mu+c+\gamma)
-\dfrac{\delta\gamma}{\mu+\delta}}{\mu}i,i \right)}{i}
-\left(\mu+c+\gamma\right)+ \dfrac{\delta\gamma}{\mu+\delta}.
\end{eqnarray*}
It follows that $H$ satisfies
\begin{eqnarray*}
\lim\limits_{i \rightarrow 0^+} H(i)
= \beta \partial_2 f(E_0)-(\mu+~c+\gamma)+\dfrac{\delta\gamma}{\mu+\delta}
= \left((\mu+c+\gamma)-\dfrac{\delta\gamma}{\mu+\delta}\right)(R_{0}-1)> 0
\end{eqnarray*}
and
\begin{eqnarray*}
H \left(\dfrac{\Lambda(\mu+\delta)}{(\mu+\delta)(\mu+c+\gamma)-\delta\gamma}\right)
=\dfrac{\delta\gamma}{\mu+\delta} -(\mu+c+\gamma) <0.
\end{eqnarray*}
Then, by the intermediate value theorem, there exists at least a $i^*$ such that 
$$ 
0 < i^* <\dfrac{\Lambda(\mu+\delta)}{(\mu+\delta)(\mu+c+\gamma)-\delta\gamma} 
\quad \mbox{\ and\ } \quad H(i^*)=0.
$$
Moreover, hypothesis $(H_2)$ implies that $H$ is a strictly monotone decreasing 
function on $\mathbb{R}^+$. Then, we conclude with the existence and uniqueness 
of $i^*$ such that
$$ 
0 < i^* <\dfrac{\Lambda(\mu+\delta)}{(\mu+\delta)(\mu+c+\gamma)-\delta\gamma} 
\quad \mbox{\ and\ }\quad H(i^*)=0.
$$
Furthermore, since $ \displaystyle \int_0^{h}g(\tau)d\tau = 1$, it follows that
$$
\beta \displaystyle \int_0^{h}g(\tau)f(s^*,i^*)d\tau=\Lambda-\mu s^*
$$ 
and 
$$
\beta \displaystyle \int_0^{h}g(\tau)
f\left(s^*,i^*\right)d\tau=(\mu+c+\gamma)i^*-\delta r^*,
$$ 
where 
$$ 
r^*=\displaystyle\dfrac{\gamma}{\mu + \delta}i^*.
$$
Therefore, $E^* = (s^*, i^*,r^*)$ is the unique endemic equilibrium of system \eqref{Model}. 
\end{proof}


\subsection{Global stability of the disease-free equilibrium}

We define a Lyapunov functional,
showing the global asymptotic stability
of the disease-free equilibrium $E_0$
of system \eqref{Model}.

\begin{theorem}
\label{theorem1}
Assume that the hypotheses $(H_1)$ and $(H_2)$ hold. Then, the disease
free equilibrium $E_{0}$ of system \eqref{Model} is globally asymptotically 
stable if, and only if, $ R_{0} \leq 1$.	
\end{theorem}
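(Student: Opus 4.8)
The plan is to prove the two implications separately: necessity by contraposition (leaning on Lemma~\ref{Lemma:EE}), and sufficiency by exhibiting a Lyapunov functional and invoking the LaSalle invariance principle for functional differential equations. For the ``only if'' direction, suppose $R_{0} > 1$. Then Lemma~\ref{Lemma:EE} furnishes an endemic equilibrium $E^{*} = (s^{*}, i^{*}, r^{*})$ with $i^{*} > 0$, which is a constant (hence bounded) solution of \eqref{Model} that does not tend to $E_{0}$; therefore $E_{0}$ cannot be a global attractor, and a fortiori is not globally asymptotically stable. (Equivalently, linearizing \eqref{Model} at $E_{0}$ and using the stability results of \cite{VANDENDRIESSCHE200229} shows the characteristic equation has a root with positive real part when $R_{0} > 1$, so $E_{0}$ is unstable.) Hence global asymptotic stability of $E_{0}$ forces $R_{0} \leq 1$.

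For the ``if'' direction, assume $R_{0} \leq 1$ and work on the positively invariant region $\Omega$ of Theorem~\ref{thm:pos:bound}. Hypotheses $(H_{1})$ and $(H_{2})$ give, for $(s,i) \in \Omega$ with $i > 0$,
$$
\frac{f(s,i)}{i} \leq \lim_{\eta \to 0^{+}} \frac{f(s,\eta)}{\eta} = \partial_{2} f(s,0) \leq \partial_{2} f(E_{0}),
$$
so $f(s,i) \leq \partial_{2} f(E_{0})\, i$ on all of $\Omega$. I would then consider the Lyapunov functional
$$
L(t) = i(t) + \frac{\delta}{\mu+\delta}\, r(t) + \beta\, \partial_{2} f(E_{0}) \int_{0}^{h} g(\tau) \int_{t-\tau}^{t} i(u)\, du\, d\tau,
$$
which is well defined and nonnegative on $\Omega$.

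Differentiating $L$ along solutions of \eqref{Model}, using $\int_{0}^{h} g(\tau)\, d\tau = 1$ and the identity $\beta\, \partial_{2} f(E_{0}) - \frac{(\mu+\delta)(\mu+c+\gamma) - \delta\gamma}{\mu+\delta} = \frac{(\mu+\delta)(\mu+c+\gamma) - \delta\gamma}{\mu+\delta}\,(R_{0} - 1)$, a direct computation (in which the $\delta r(t)$ contributions cancel) yields
$$
\dot{L}(t) = \beta \int_{0}^{h} g(\tau) \bigl[ f(s(t), i(t-\tau)) - \partial_{2} f(E_{0})\, i(t-\tau) \bigr] d\tau + \frac{(\mu+\delta)(\mu+c+\gamma) - \delta\gamma}{\mu+\delta}\,(R_{0}-1)\, i(t).
$$
Since $(\mu+\delta)(\mu+c+\gamma) > \delta\gamma$, since $R_{0} \leq 1$, and since the bracketed integrand is nonpositive by the inequality above, we conclude $\dot{L} \leq 0$ on $\Omega$.

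It then remains to identify the largest invariant set $M \subset \{\dot{L} = 0\}$ and apply LaSalle's theorem. If $R_{0} < 1$, then $\dot{L} = 0$ forces $i \equiv 0$ on $M$, and substituting into the third and first equations of \eqref{Model} forces $r(t) \to 0$ and $s(t) \to \Lambda/\mu$, so $M = \{E_{0}\}$. If $R_{0} = 1$, then $\dot{L} = 0$ forces $f(s(t), i(t-\tau)) = \partial_{2} f(E_{0})\, i(t-\tau)$ for $\tau$ in the support of $g$; extracting a strict inequality from $(H_{1})$ and $(H_{2})$ away from $\{i = 0\}$ and $\{s = \Lambda/\mu\}$ shows this can persist on an invariant orbit only if $i \equiv 0$ (directly, or via $s \equiv \Lambda/\mu$, which forces $\int_{0}^{h} g(\tau) f(s(t),i(t-\tau))\, d\tau = 0$ in the first equation), so again $M = \{E_{0}\}$. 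By the LaSalle invariance principle for delay equations every solution starting in $\Omega$ converges to $E_{0}$, and together with Lyapunov stability of $E_{0}$ (coming from $\dot L\le 0$ near $E_{0}$) this gives global asymptotic stability. The main obstacle I foresee is exactly the borderline case $R_{0} = 1$: one must argue rigorously that $f(s,i) = \partial_{2} f(E_{0})\, i$ cannot hold along a nontrivial orbit, which requires careful use of the strict monotonicity in $(H_{1})$ together with the monotone decay of $\phi$ in $(H_{2})$.
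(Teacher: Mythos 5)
Your proof is essentially correct but follows a genuinely different route from the paper's. For sufficiency, the paper uses a Volterra-type functional that also contains an $s$-component, namely $s-s_0-\int_{\Lambda/\mu}^{s}\lim_{i\to 0^+}\frac{f(s_0,i)}{f(\sigma,i)}\,d\sigma$, added to the $i$, $r$ and delay terms; its derivative then produces the manifestly nonpositive term $\mu\bigl(1-\lim_{i\to 0^+}\frac{f(s_0,i)}{f(s,i)}\bigr)(s_0-s)$, which pins $s$ to $s_0$ on the set $\{\dot w=0\}$ without further work. Your functional $L=i+\frac{\delta}{\mu+\delta}r+\beta\,\partial_2 f(E_0)\int_0^h g(\tau)\int_{t-\tau}^{t}i(u)\,du\,d\tau$ is linear in $(i,r)$ and does not see $s$; the computation $\dot L=\beta\int_0^h g(\tau)[f(s,i(t-\tau))-\partial_2 f(E_0)i(t-\tau)]d\tau+k(R_0-1)i(t)$ is correct, and the pointwise bound $f(s,i)\le \partial_2 f(E_0)\,i$ on $\Omega$ follows as you say from $(H_1)$, $(H_2)$ and $f(s,0)=0$. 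What your approach buys is simplicity of the functional and of the sign argument; what it costs is a heavier LaSalle step, since at $R_0=1$ the set $\{\dot L=0\}$ is not obviously $\{i=0\}$. Your sketch of that step is the right one and does close: on a complete orbit in the invariant set with $i\not\equiv 0$, equality in $f(s,i)/i\le f(\Lambda/\mu,i)/i$ and strict monotonicity in $s$ force $s\equiv\Lambda/\mu$, whence the first equation gives $\int_0^h g(\tau)f(\Lambda/\mu,i(t-\tau))\,d\tau=0$, and $(H_1)$ with $(H_3)$ ($f(0,i)=0$, $f$ strictly increasing in $s$, so $f(\Lambda/\mu,i)>0$ for $i>0$) yields $i\equiv 0$ — but this should be written out, not left as a remark; note that the paper's own proof glosses over exactly the same borderline case. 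Finally, you supply the necessity direction via Lemma~\ref{Lemma:EE} (an endemic equilibrium in $\Omega$ is a bounded orbit not attracted to $E_0$), which the paper's proof omits entirely even though the theorem is stated as an equivalence; that is a genuine improvement. Your parenthetical alternative via linearization and \cite{VANDENDRIESSCHE200229} is the only shaky point: that reference treats ODE systems, and for \eqref{Model} one would have to analyse the delayed characteristic equation, so keep the equilibrium-based argument as the actual proof.
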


\begin{proof}
Let
\begin{equation*}
w(t) = s(t)- s_0 - \int_{\frac{\Lambda}{\mu}}^{s(t)}
\lim\limits_{i \rightarrow 0^+} \dfrac{f(s_0,i(t))}{f( \sigma,i(t))}d\sigma + i(t)
+ k \displaystyle \int_0^h g(t) \displaystyle \int_{0}^{\tau} i(t-u)dud\tau
+ \dfrac{\delta}{\mu+\delta} r(t),
\end{equation*}
where
\begin{eqnarray*}
k = \dfrac{(\mu+\delta)(\mu+c+\gamma)-\delta\gamma}{\mu+\delta}.
\end{eqnarray*}
We have
\begin{equation*}
\begin{split}
\dfrac{dw(t)}{dt}
&= \left(1-\lim\limits_{i \rightarrow 0^+}
\dfrac{f(s_0,i(t))}{f( s(t),i(t))} \right) \left(\Lambda- \mu s(t)
- \beta \displaystyle \int_0^{h}g(\tau)f(s(t),i(t-\tau))d\tau \right) \\
&\quad + \beta  \displaystyle \int_0^{h}g(\tau)f(s(t),i(t-\tau))d\tau 
-(\mu+c+\gamma)i(t) +\delta r(t)\\
&\quad + \dfrac{\delta}{\mu+\delta} \left(\gamma i(t)- (\mu + \delta) r(t)\right)
+ k  \displaystyle \int_0^h g(t) \left(i(t)-i(t-\tau)\right)d\tau.
\end{split}
\end{equation*}
It follows that
\begin{equation*}
\begin{split}
\dfrac{dw(t)}{dt}
&= \mu\left(1-\lim\limits_{i \rightarrow 0^+}
\dfrac{f(s_0,i(t))}{f( s(t),i(t))} \right) \left(s_0-s(t) \right)\\
&\quad + \beta \lim\limits_{i \rightarrow 0^+}\dfrac{f(s_0,i(t))}{f( s(t),i(t))}
\displaystyle \int_0^{h}g(\tau)f(s(t),i(t-\tau)d\tau -(\mu+c+\gamma)i(t) \\
&\quad + \delta r(t) + \dfrac{\delta}{\mu+\delta} \left(\gamma i(t)
- \left(\mu + \delta\right) r(t) \right)
+ k  \displaystyle \int_0^h g(t) \left(i(t)-i(t-\tau)\right)d\tau\\
&= \mu \left(1-\lim\limits_{i \rightarrow 0^+}\dfrac{f(s_0,i(t))}{f( s(t),i(t))}\right)(s_0-s(t))\\
&\quad +  \displaystyle \int_0^{h}g(\tau) \left( \beta \dfrac{f(s(t),i(t-\tau))}{ki(t-\tau)}
\lim\limits_{i \rightarrow 0^+}\dfrac{f(s_0,i(t))}{f( s(t),i(t))}-1  \right)k i(t-\tau)d\tau.
\end{split}
\end{equation*}
Therefore,
\begin{eqnarray*}
\beta \dfrac{f(s(t),i(t-\tau))}{ki(t-\tau)} \lim\limits_{i \rightarrow 0^+}
\dfrac{f(s_0,i(t))}{f( s(t),i(t))} \leq \dfrac{\beta}{k}
\dfrac{\partial f(s_0,i(t))}{\partial i}= R_{0}.
\end{eqnarray*}
Then,
\begin{eqnarray*}
\dfrac{dw(t)}{dt}= \mu \left(1-\lim\limits_{i \rightarrow 0^+}
\dfrac{f(s_0,i(t))}{f( s(t),i(t))} \right) \left(s_0-s(t) \right)
+ \displaystyle \int_0^{h}g(\tau) \left(R_{0} -1\right)k i(t-\tau)d\tau.
\end{eqnarray*}
We conclude that $\dfrac{dw(t)}{dt}  \leq 0$. Hence, $w$
is a Lyapunov functional for the  system \eqref{Model}. Namely,
$w{'} \leq 0$ for all $ (s,i,r)\in \mathring{\Omega}$,
where $\mathring{\Omega}$ denotes the interior of $\Omega$.
Thus, $w{'} = 0$ if and only if $(s, i, r)=(s_0, 0, 0)$.
This shows that the largest invariant subset where $w{'} = 0$
is the singleton $\{E_0\}$. By La Salle's invariance principle,
$E_0$ is globally asymptotically stable. This completes the proof.
\end{proof}


\subsection{Global stability of the endemic equilibrium}

Now we look for the global asymptotic stability of the endemic equilibrium
$E^*$ of system \eqref{Model}. To this end, we construct a suitable
Lyapunov functional. We have the following result.

\begin{theorem}
\label{thm:Stab:EE}
Assume that the hypotheses $(H_1)$ and $(H_2)$ hold.
If $R_{0} > 1$, then the unique endemic equilibrium $E^*$
given by Lemma~\ref{Lemma:EE} is globally asymptotically stable.
\end{theorem}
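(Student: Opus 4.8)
The plan is to mirror the Lyapunov-functional argument of Theorem~\ref{theorem1}, but now centred at the endemic equilibrium $E^* = (s^*, i^*, r^*)$ furnished by Lemma~\ref{Lemma:EE}, and to exploit the convexity of $G(x) = x - 1 - \ln x$, which is non-negative on $(0,\infty)$ and vanishes only at $x = 1$. Concretely, I would propose
\begin{equation*}
V = V_1 + V_2 + V_3 + V_4,
\end{equation*}
with a Volterra-type term for each compartment,
\begin{equation*}
V_1 = s(t) - s^* - \int_{s^*}^{s(t)} \frac{f(s^*, i^*)}{f(\sigma, i^*)}\, d\sigma,
\quad
V_2 = i^* G\!\left(\frac{i(t)}{i^*}\right),
\quad
V_3 = \frac{\delta}{\mu+\delta}\, r^* G\!\left(\frac{r(t)}{r^*}\right),
\end{equation*}
together with the distributed-delay compensator
\begin{equation*}
V_4 = \beta f(s^*, i^*) \int_0^{h} g(\tau) \int_0^{\tau} G\!\left(\frac{i(t-u)}{i^*}\right) du\, d\tau.
\end{equation*}
Note that, since $i^* > 0$, the ratio $f(s^*,i^*)/f(\sigma,i^*)$ needs no limit (unlike the disease-free case), and $(H_1)$ guarantees $V_1 \geq 0$ with minimum at $s = s^*$. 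The role of $V_4$ is to absorb the history terms produced when differentiating the incidence integral; its coefficient $\beta f(s^*, i^*)$ is chosen precisely so that the boundary contributions cancel.

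Next I would differentiate along trajectories of \eqref{Model} and substitute the three equilibrium identities from Lemma~\ref{Lemma:EE}, namely $\Lambda = \mu s^* + \beta f(s^*, i^*)$, $\beta f(s^*, i^*) = (\mu+c+\gamma) i^* - \delta r^*$, and $(\mu+\delta) r^* = \gamma i^*$. After these substitutions, $\dot V_1$ yields the sign-definite term $-\mu (s - s^*)\left(1 - f(s^*,i^*)/f(s, i^*)\right) \leq 0$, since $(H_1)$ makes $f(\cdot, i^*)$ increasing in $s$, so the bracket and $s - s^*$ share their sign. Using $\int_0^h g(\tau)\,d\tau = 1$, the term $\dot V_4$ contributes $\beta f(s^*, i^*)\left[ G\!\left(i(t)/i^*\right) - \int_0^h g(\tau)\, G\!\left(i(t-\tau)/i^*\right)\, d\tau \right]$.

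The heart of the argument is to show that everything remaining collapses into a non-positive expression. I would reorganise the incidence and relapse contributions so that, under $\int_0^h g(\tau)\,d\tau$, they assemble into a sum of terms $-G(\cdot)$ evaluated at the ratios
\begin{equation*}
\frac{f(s^*, i^*)}{f(s, i^*)}, \quad
\frac{f(s, i(t-\tau))\, i^*}{f(s^*, i^*)\, i(t)}, \quad
\frac{i(t)\, r^*}{i^* r(t)}, \quad
\frac{r(t)\, i^*}{r^* i(t)},
\end{equation*}
each of which is non-positive. The only contribution not immediately of $G$-type is a residual incidence term controlled by $(H_2)$: since $\phi(s, i) = f(s, i)/i$ is decreasing in $i$ while $f$ is increasing in $i$, one checks directly that
\begin{equation*}
\left(\frac{f(s, i)}{f(s, i^*)} - \frac{i}{i^*}\right)
\left(1 - \frac{f(s, i^*)}{f(s, i)}\right) \leq 0,
\end{equation*}
and a short logarithmic manipulation bounds the residual above by $0$.

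Assembling these estimates gives $\dot V \leq 0$ on $\mathring{\Omega}$, with equality forcing $s = s^*$, $r = r^*$, and $i(t-\tau) = i^* = i(t)$; hence the largest invariant set on which $\dot V = 0$ is $\{E^*\}$, and LaSalle's invariance principle yields global asymptotic stability. The main obstacle I anticipate is purely algebraic: reorganising the coupled incidence, relapse, and delay terms into the clean sum of $G$-functions above. This requires choosing the logarithmic splitting carefully—adding and subtracting exactly the right $\ln$ terms so that the cross terms telescope—leaving as the sole non-$G$ remainder the quantity tamed by $(H_2)$.
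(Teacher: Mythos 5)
Your proposal is correct and follows essentially the same route as the paper: your $V_1+V_2+V_3+V_4$ is exactly the paper's Lyapunov functional (merely grouped differently), and your factored residual inequality $\left(\frac{f(s,i)}{f(s,i^*)}-\frac{i}{i^*}\right)\left(1-\frac{f(s,i^*)}{f(s,i)}\right)\leq 0$ is algebraically identical to the paper's $(H_1)$--$(H_2)$ estimate, while your two $G$-terms in the ratios $\frac{i\,r^*}{i^*r}$ and $\frac{r\,i^*}{r^*i}$ are equivalent to the paper's perfect-square relapse term. The only cosmetic difference is that you invoke LaSalle's invariance principle explicitly at the end, which the paper leaves implicit.
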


\begin{proof}
Let $G(x)=x-1-\ln(x)$.
We have $G(x)\geq 0 $ if $x > 0$ and $G(x)=0 $ if $ x=1$.
Let us consider the following Lyapunov functional:
$V(t)=V_1(t)+V_2(t)+V_3(t)$, where
\begin{eqnarray*}
V_1(t) = s(t)-s^* - \int_{s^*} ^{s(t)} \dfrac{f(s^*,i^*)}{f( \sigma ,i^*)}
d \sigma + i(t)-i^* -i^* \ln\left(\dfrac{i(t)}{i^*}\right),
\end{eqnarray*}
\begin{equation*}
V_2(t) =  \beta f(s^*,i^*) \int_0^h g(\tau)\int_{0} ^{\tau}
G\left( \dfrac{ i(t-u)}{i^*} \right)du d\tau,
\quad
V_3(t) = \dfrac{\delta}{\mu+\delta}\left(r(t)-r^* -r^*
\ln\left(\dfrac{r(t)}{r^*}\right)\right).
\end{equation*}
Then,
\begin{multline*}
\dfrac{dV_1(t)}{dt}
= \left(1-\dfrac{f(s^*,i^*)}{f(s(t),i^*)} \right)
\left(\Lambda- \mu s(t)- \beta \displaystyle \int_0^{h}
g(\tau)f(s(t),i(t- \tau))d\tau \right)\\
+  \left(1-\dfrac{i^*}{i(t)}\right)
\left(\beta \displaystyle \int_0^{h}g(\tau)f(s(t),i(t-\tau))d\tau
-(\mu+c+ \gamma)i(t)+ \delta r(t) \right).
\end{multline*}
We also have
\begin{eqnarray*}
\dfrac{dV_2(t)}{dt}= \beta f(s^*,i^*)\int_0^{h}g(\tau)
\left(G\left( \dfrac{i(t)}{i^*}\right)
-G\left( \dfrac{i(t-\tau)}{i^*}\right)\right) d\tau
\end{eqnarray*}
and
\begin{eqnarray*}
\dfrac{dV_3(t)}{dt} = \dfrac{\delta}{\mu+\delta}
\left(1- \dfrac{r^*}{r(t)} \right)
\left(\gamma i(t)- (\mu + \delta) r(t) \right).
\end{eqnarray*}
One can see that
\begin{eqnarray*}
G \left( \dfrac{i(t)}{i^*} \right)
-G \left( \dfrac{i(t-\tau)}{i^*} \right)
= \dfrac{i(t)}{i^*}- \dfrac{i(t- \tau)}{i^*}
+ \ln \left( \dfrac{i(t- \tau)}{i(t)} \right).
 \end{eqnarray*}
Thus,
\begin{eqnarray*}
\left\{
\begin{array}{ll}
\Lambda= \mu s^* + \beta f(s^*,i^*),\\
\beta f(s^*,i^*)= ( \mu+c+ \gamma)i^*
- \dfrac{\delta\gamma}{\mu+\delta}i^*,\\
\gamma i^*=(\mu +\delta)r^*.
\end{array}
\right.
\end{eqnarray*}
It follows that
\begin{equation*}
\begin{split}
\dfrac{dV(t)}{dt}
&= \left(1-\dfrac{f(s^*,i^*)}{f(s(t),i^*)}\right)
\left(\mu s^* + \beta f(s^*,i^*)- \mu s(t) \right)\\
&\quad - \beta \left(1-\dfrac{f(s^*,i^*)}{f(s(t),i^*)}\right)
\displaystyle \int_0^{h} g(\tau)f(s(t),i(t- \tau))d\tau  \\
&\quad + \left(1-\dfrac{i^*}{i(t)} \right)\left(\beta
\displaystyle \int_0^{h}g(\tau)f(s(t),i(t-\tau))d\tau
-( \mu+c+ \gamma)i(t)\right)\\
&\quad +\left(1-\dfrac{i^*}{i(t)}\right) \delta r(t)\\
&\quad + \beta f(s^*,i^*) \displaystyle \int_0^{h}g(\tau)
\left(\dfrac{i}{i^*}- \dfrac{i(t- \tau)}{i^*}
+ \ln\left( \dfrac{i(t- \tau)}{i(t)}\right) \right) d\tau\\
&\quad + \dfrac{\delta}{\mu+\delta} \left(1- \dfrac{r^*}{r(t)} \right)
\left(\gamma i(t)- (\mu + \delta) r(t) \right) \\
&= \mu \left(1-\dfrac{f(s^*,i^*)}{f(s(t),i^*)}\right)
\left(s^* - s(t) \right)\\
&\quad + \beta f(s^*,i^*) \displaystyle \int_0^{h}g(\tau)
\left(\dfrac{f(s(t),i(t- \tau))}{f(s(t),i^*)}-\frac{i^*}{i(t)}
\dfrac{f(s(t),i(t- \tau))}{f(s^*,i^*)}\right)d\tau \\
&\quad +\beta f(s^*,i^*) \displaystyle \int_0^{h}g(\tau)
\left(2-\frac{f(s^*,i^*)}{f(s(t),i^*)}-\frac{i(t)}{i^*}\right)d\tau \\
&\quad + \beta f(s^*,i^*) \displaystyle \int_0^{h}g(\tau)
\left(\dfrac{i}{i^*}- \dfrac{i(t- \tau)}{i^*}
+ \ln\left( \dfrac{i(t- \tau)}{i(t)}\right) \right) d\tau \\
&\quad + \left( 2 \dfrac{\delta\gamma}{\mu+\delta}i^*
- \delta \frac{i^*}{i(t)} r(t)
-\dfrac{\delta\gamma}{\mu+\delta}\frac{i(t)}{r(t)} r^* \right).
\end{split}
\end{equation*}
Using
\begin{equation*}
\ln\left( \dfrac{i(t- \tau)}{i(t)}\right)
= \ln \left(\dfrac{f(s^*,i^*)}{f(s(t),i^*)}\right)
+ \ln \left(\frac{i^*}{i(t)} \dfrac{f(s(t),i(t- \tau))}{f(s^*,i^*)} \right)
+ \ln \left(\frac{i(t- \tau)}{i^*} \dfrac{f(s(t),i^*)}{f(s(t),i(t- \tau))}\right),
\end{equation*}
then
\begin{equation*}
\begin{split}
\dfrac{dV(t)}{dt}
&= \mu \left(1-\dfrac{f(s^*,i^*)}{f(s(t),i^*)} \right)
\left(s^* - s(t) \right) \\[2ex]
&\quad +  \beta f(s^*,i^*) \displaystyle \int_0^{h}g(\tau)
\left(1- \dfrac{f(s^*,i^*)}{f(s(t),i^*)}
+\ln \left(\dfrac{f(s^*,i^*)}{f(s(t),i^*)}\right) \right) d\tau \\[2ex]
&\quad +  \beta f(s^*,i^*) \displaystyle \int_0^{h}g(\tau)
\left(1- \frac{i^*}{i(t)} \dfrac{f(s(t),i(t- \tau))}{f(s^*,i^*)}\right) d\tau\\[2ex]
&\quad +  \beta f(s^*,i^*) \displaystyle \int_0^{h}g(\tau)\ln
\left( \frac{i^*}{i(t)} \dfrac{f(s(t),i(t- \tau))}{f(s^*,i^*)} \right) d\tau \\[2ex]
&\quad +  \beta f(s^*,i^*) \displaystyle \int_0^{h}g(\tau)
\left(1- \frac{i(t- \tau)}{i^*} \dfrac{f(s(t),i^*)}{f(s(t),i(t- \tau))}\right)d\tau 
\end{split}
\end{equation*}
\begin{equation*}
\begin{split}
&\quad +\beta f(s^*,i^*) \displaystyle \int_0^{h}g(\tau)
\ln \left( \frac{i(t- \tau)}{i^*} \dfrac{f(s(t),i^*)}{f(s(t),i(t- \tau))} \right) d\tau \\[2ex]
&\quad + \beta f(s^*,i^*) \displaystyle \int_0^{h}g(\tau)
\left( \frac{i(t- \tau)}{i^*} \dfrac{f(s(t),i^*)}{f(s(t),i(t- \tau))} - 1 \right) d\tau\\[2ex]
&\quad + \beta f(s^*,i^*) \displaystyle \int_0^{h}g(\tau)
\left(- \frac{i(t- \tau)}{i^*} + \dfrac{f(s(t),i(t- \tau))}{f(s(t),i^*)} \right) d\tau\\[2ex]
&\quad - \dfrac{\delta\gamma}{\mu+\delta}\frac{i(t)}{r(t)} r^*
\left(\dfrac{i^*r(t)}{i(t)r^*}-1 \right)^2.
\end{split}
\end{equation*}
The hypotheses $(H_1)$ and $(H_2)$ ensure that
\begin{eqnarray*}
\frac{i(t- \tau)}{i^*} \dfrac{f(s(t),i^*)}{f(s(t),i(t- \tau))}
- 1- \frac{i(t- \tau)}{i^*} + \dfrac{f(s(t),i(t- \tau))}{f(s(t),i^*)}
\leq  0.
\end{eqnarray*}
From the hypothesis $(H_1)$, we have
\begin{eqnarray*}
\mu \left(1-\dfrac{f(s^*,i^*)}{f(s(t),i^*)} \right) \left(s^* - s(t) \right) \leq 0.
\end{eqnarray*}
Hence, $\dfrac{dV(t)}{dt} \leq 0$. We conclude that the endemic equilibrium
of system \eqref{Model} is globally asymptotically stable
in $\mathring{\Omega}$, provided $R_{0} > 1$.
\end{proof}


\section{Numerical simulations}
\label{sec:05}

In this section, we carry out some numerical simulations
to illustrate the obtained theoretical results. Consider
the following delayed SIRI epidemic model with distributed
time delay and relapse:
\begin{eqnarray}
\label{example1}
\left\{
\begin{array}{lll}
\dfrac{ds(t)}{dt}
= \Lambda- \mu s(t)- \displaystyle \beta \int_0^{h}
\dfrac{ e^{- \tau}}{1- e ^{-h}}s(t)i(t-\tau) d \tau,\\[2ex]
\dfrac{di(t)}{dt}= \displaystyle \beta \int_0^{h}
\dfrac{ e^{- \tau}}{1- e^{-h}}s(t)i(t- \tau) d \tau
-( \mu+c+ \gamma)i(t) +\delta r(t), \\[2ex]
\dfrac{dr(t)}{dt}=\gamma i(t)- (\mu +\delta) r(t)
\end{array}
\right.
\end{eqnarray}
with initial conditions
\begin{equation}
\label{ex1:ic01}
\Phi_1(\theta)=\sin(0.5 \theta) + 150,
\ \  \  \Phi_2(\theta) = \sin(10 \theta) + 20,
\  \  \  \Phi_3(\theta) = 0,
\quad -h\leq \theta \leq 0,
\end{equation}
or
\begin{equation}
\label{ex1:ic02}
\Phi_1(\theta)=\cos(5 \theta) + 200,
\ \    \  \Phi_2(\theta) = 10 \sin( \theta) + 30,
\  \  \  \Phi_3(\theta) = 70,
\quad -h\leq \theta \leq 0.
\end{equation}
From Figure~\ref{figure1}, we see that if $R_{0} \leq 1$,
then, biologically speaking, the disease dies 
out from the population: the solution $(s(t),i(t),r(t))$ 
of \eqref{example1} converges to the disease free equilibrium 
$E_0=\left(\frac{\Lambda}{\mu},0,0\right)$. Hence, $E_0$ is globally 
asymptotically stable. 
\begin{figure}[htbp]
\centering
\subfigure[$s(t)$]{\includegraphics[scale=.33]{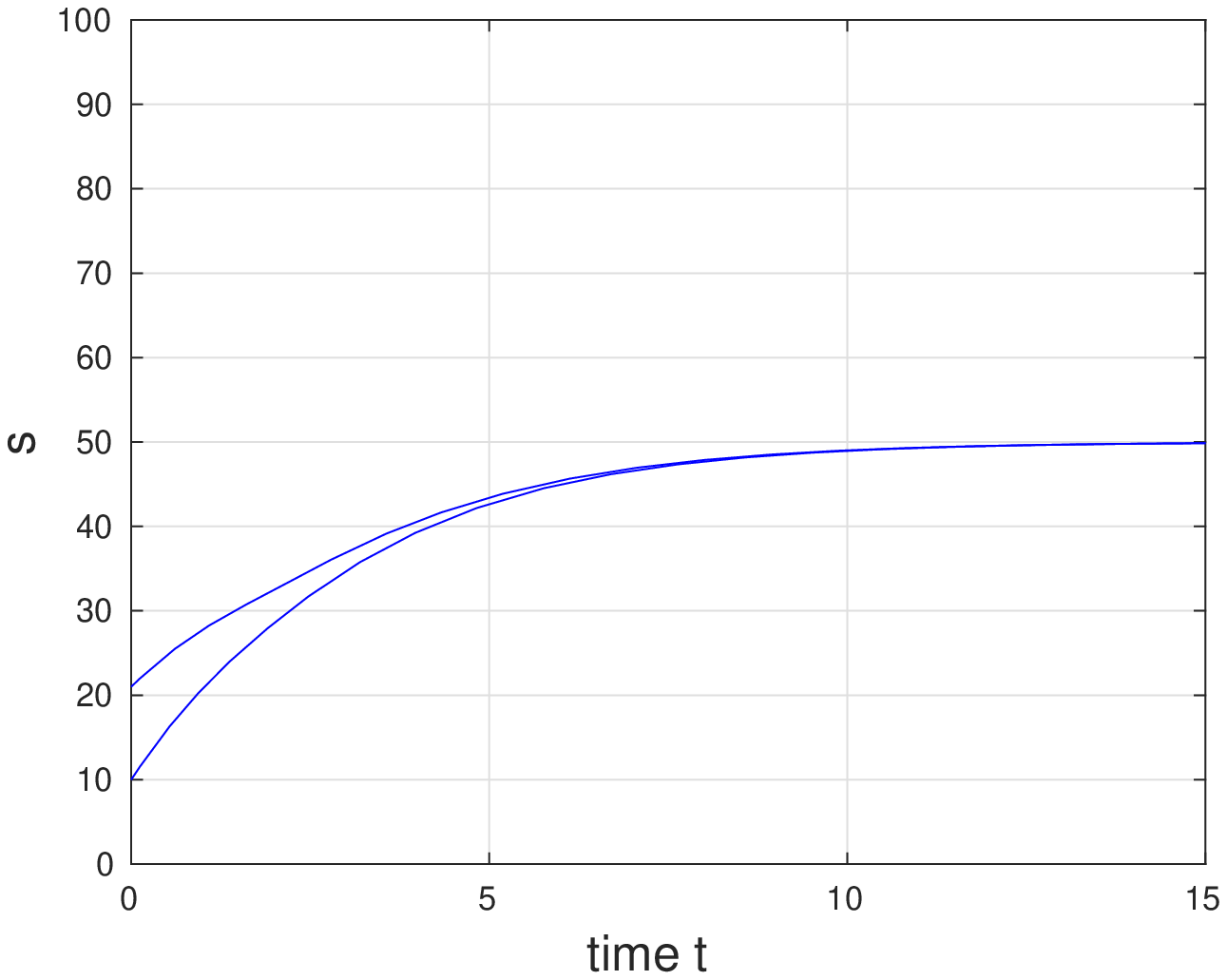}}
\subfigure[$i(t)$]{\includegraphics[scale=.33]{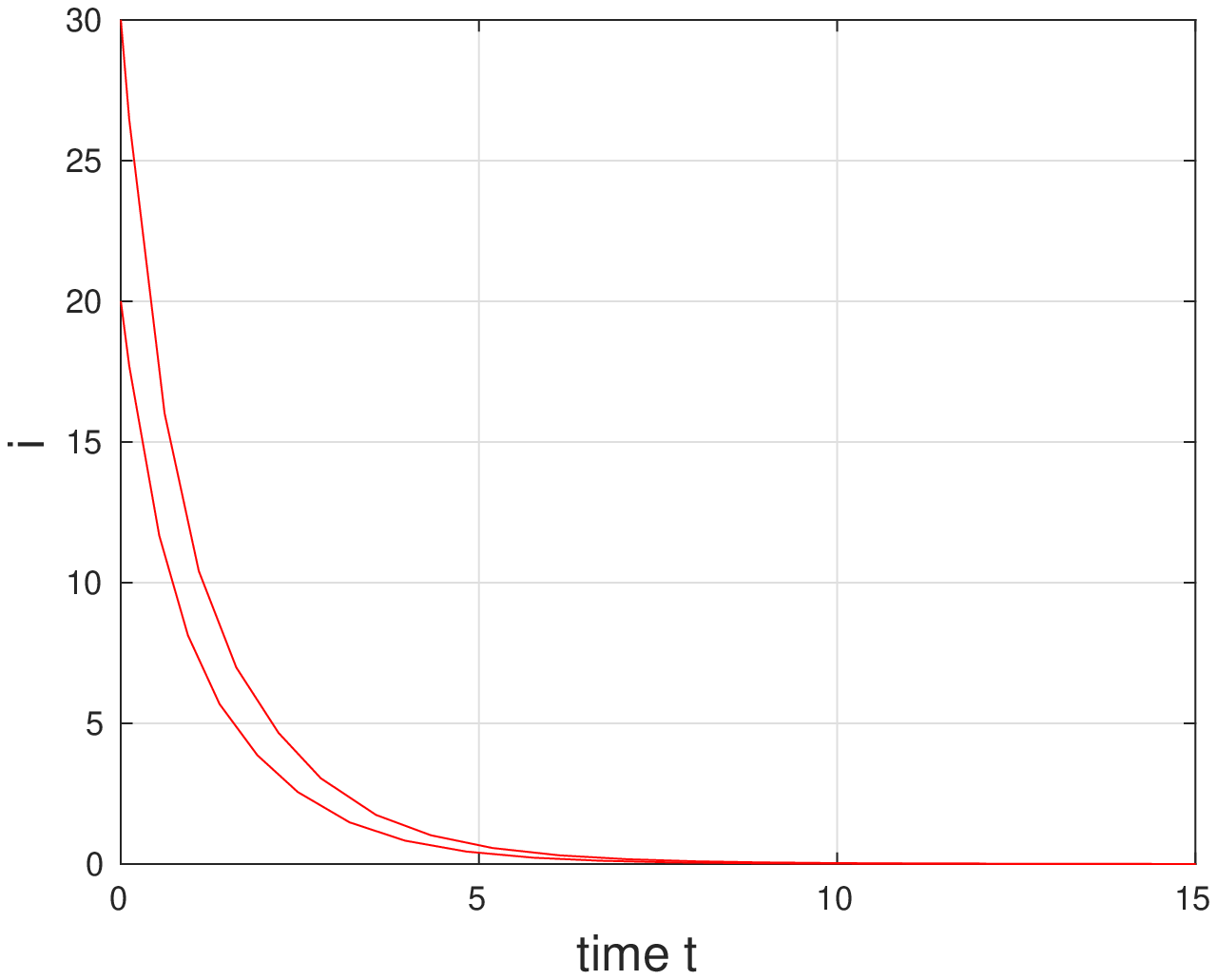}}
\subfigure[$r(t)$]{\includegraphics[scale=.33]{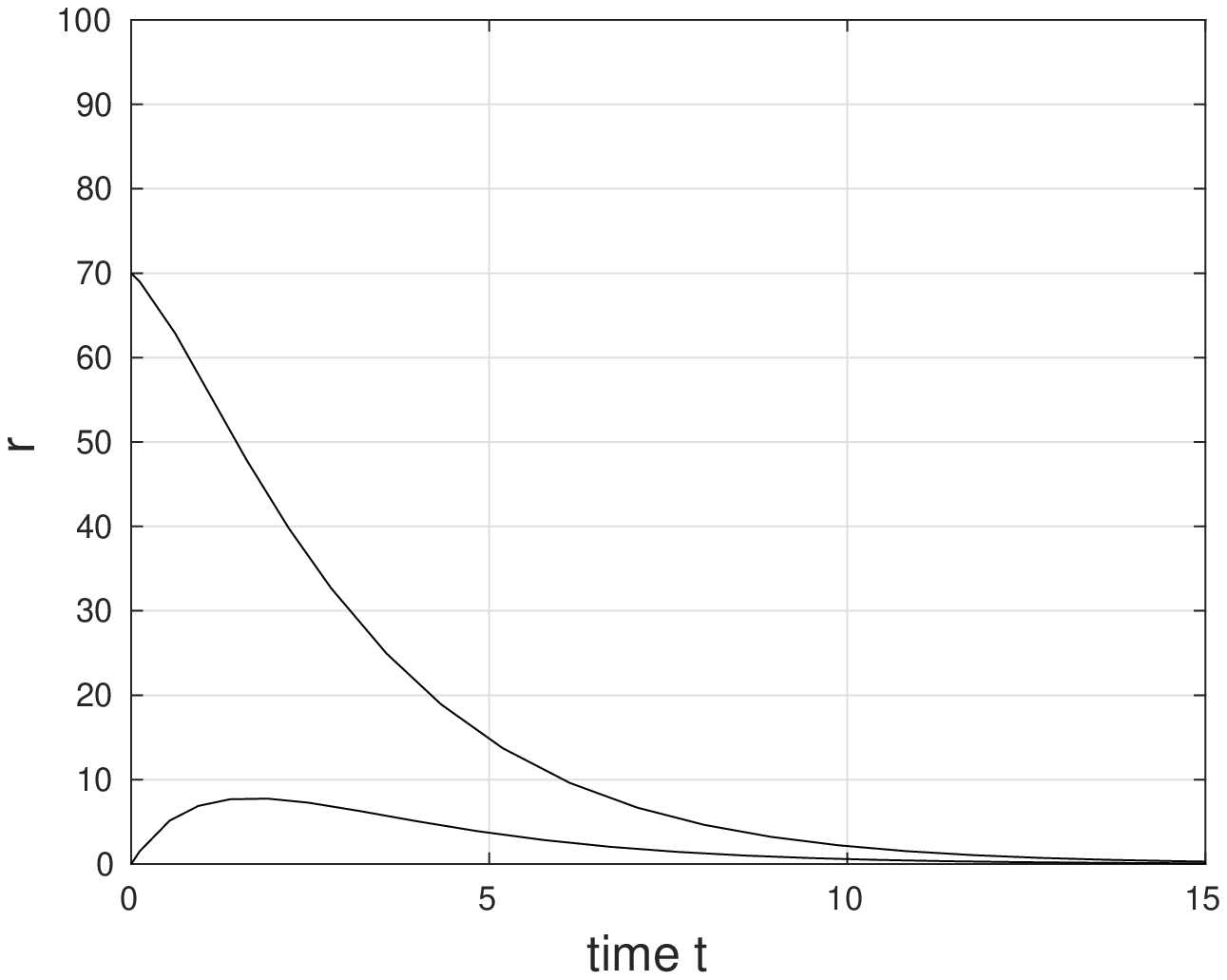}}
\caption{Trajectories of system \eqref{example1} 
with time delay $h = 2$, initial conditions \eqref{ex1:ic01}
and \eqref{ex1:ic02}, and parameters $\Lambda = 20$, $\mu = 0.4$, 
$\beta = 0.02$, $c = 0.1$, $\delta = 0.006$, 
and $ \gamma = 0.7$, for which $R_{0} = 0.8406 < 1$,
converging to the disease free equilibrium $E_0$.}
\label{figure1}
\end{figure}

Figure~\ref{figure2} illustrates the case when $R_{0} > 1$. 
In such situation, the solution $ (s(t),i(t),r(t))$ of \eqref{example1}
converges to the endemic equilibrium $E^*$. Thus, the unique endemic
equilibrium is globally asymptotically stable, which means, biologically,
that the disease persists but is controlled.	
\begin{figure}[htbp]
\centering
\subfigure[$s(t)$]{\includegraphics[scale=.348]{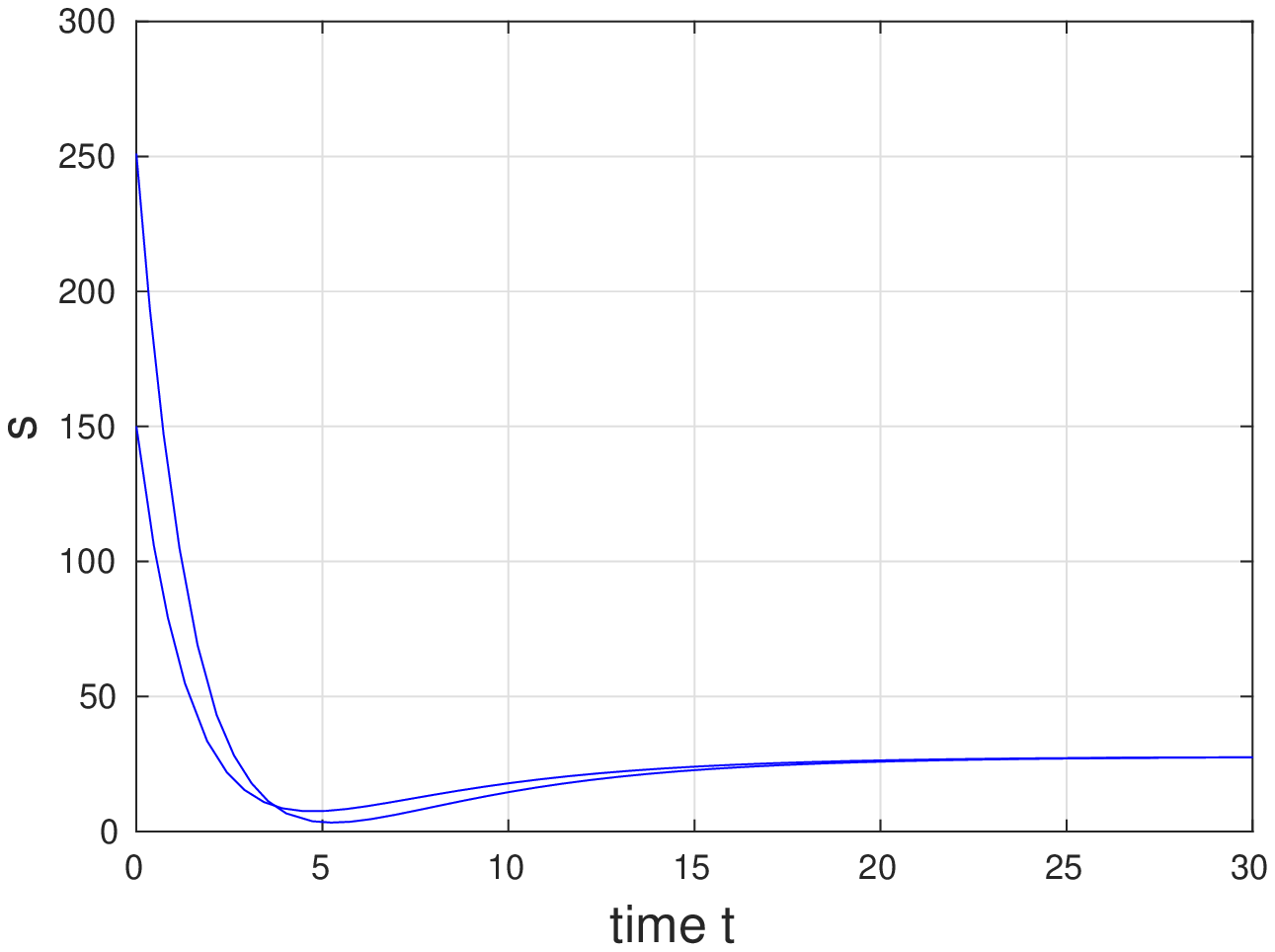}}
\subfigure[$i(t)$]{\includegraphics[scale=.328]{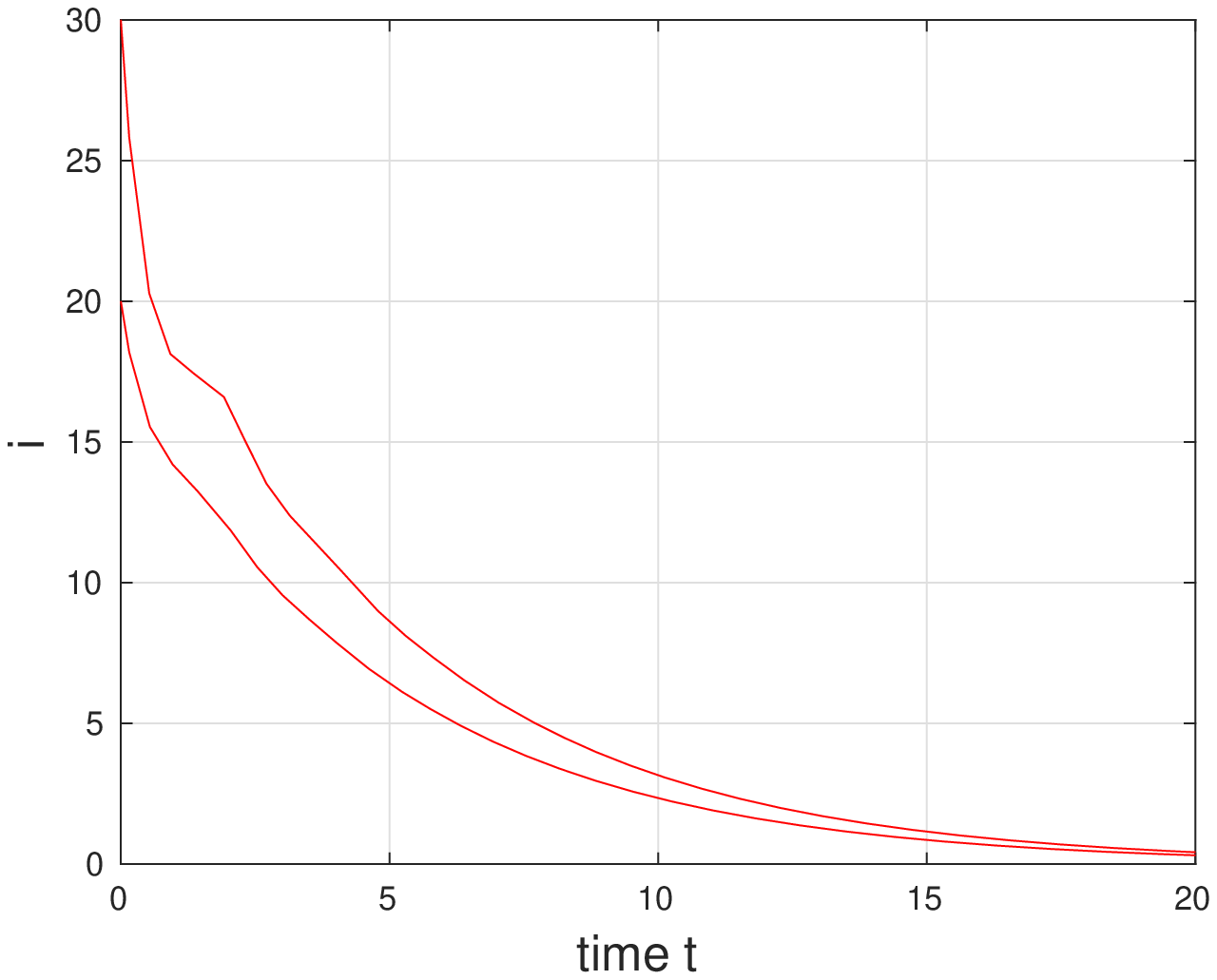}}
\subfigure[$r(t)$]{\includegraphics[scale=.328]{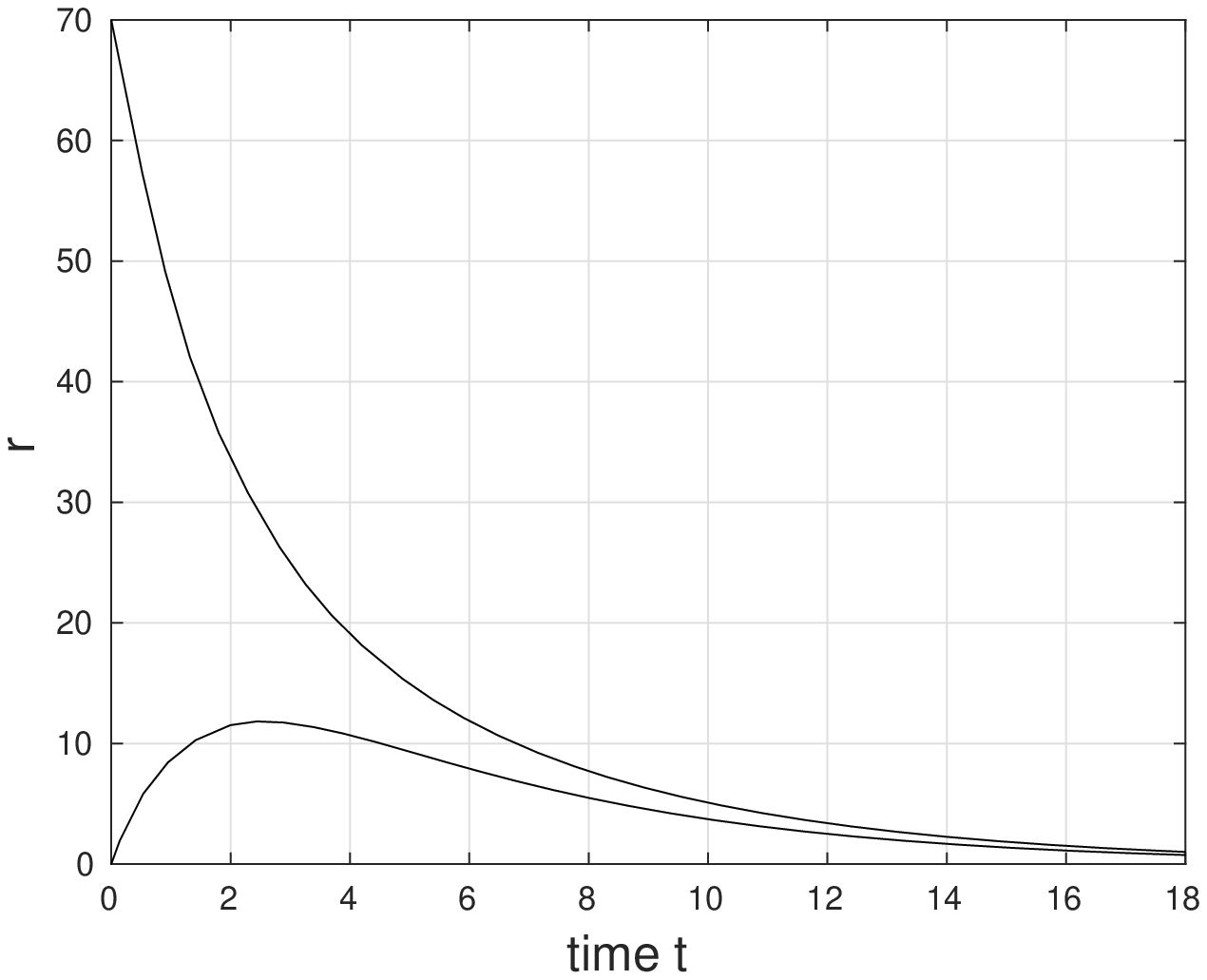}}
\caption{Trajectories of system \eqref{example1} with time delay
$h = 2$, initial conditions \eqref{ex1:ic01}
and \eqref{ex1:ic02}, and parameters $\Lambda = 18$, $\mu = 0.65$, 
$\beta = 0.2$, $c = 0.77$, $\delta = 0.02$, and $ \gamma = 0.75$, 
for which $R_{0} = 2.2923 > 1$, converging to the endemic 
equilibrium $E^*$.}
\label{figure2}
\end{figure}


\section{Concluding remarks}
\label{sec:06}

We investigated a SIRI epidemic model \eqref{Model}
with distributed delay and relapse. The basic reproduction 
number $R_{0}$ was computed, which determines the existence 
of an equilibrium for the model. Precisely, when $R_{0} \leq 1$, 
then model \eqref{Model} has one unique disease-free equilibrium $E_0$, 
while for $R_{0} > 1$ it has a disease free equilibrium $E_0$ 
and a unique endemic equilibrium $ E^*$. We proved, with the help 
of the direct Lyapunov method, that all steady states of \eqref{Model} 
are globally asymptotically stable: the disease free equilibrium is 
globally asymptotically stable for $R_{0}\leq 1$; when $ R_{0} > 1$, 
then we established that there is a unique endemic equilibrium which 
is globally asymptotically stable. As future work, we plan to study 
a related model with distributed relapse. This is under current 
investigation and will be addressed elsewhere.


\section*{Appendix}

\begin{theorem}[See \cite{budincevic2010comparison}]
\label{thm:05:App}
Let $f, g: A\rightarrow\mathbb{R}$, with $A\subset\mathbb{R}^{2}$ an open set, 
be continuous Lipschitz functions with respect to the second argument such that
$f(t, x) \leq g(t, x)$ for all $(t, x) \in A$. Moreover, let $(t_0, y_0)$ and 
$(t_0, v_0)$ be two admissible initial states with $y_0 \leq v_0$. 
Then, if $y, v : I \rightarrow \mathbb{R}$ are, respectively, 
the solutions of the Cauchy problems	
\begin{eqnarray*}
\left\{
\begin{array}{ll}
\dfrac{dy}{dt} = f(t, y(t)),\\[0.2cm]
y(t_0) = y_0,
\end{array}
\right.
\end{eqnarray*}
and
\begin{eqnarray*}
\left\{
\begin{array}{ll}
\dfrac{dv}{dt} = g(t, v(t)),\\[0.2cm]
v(t_0) = v_0,
\end{array}
\right.
\end{eqnarray*}
where $I$ is the common interval of existence, then
$y(t) \leq v(t)$ for all $t  \in I$ and $t\geq t_0$.
\end{theorem}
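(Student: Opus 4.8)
The plan is to reduce the claim to a statement about the sign of the difference $w(t) = v(t) - y(t)$ on $\{t \in I : t \ge t_0\}$, where $w$ is continuous with $w(t_0) = v_0 - y_0 \ge 0$. Writing
\[
w'(t) = g(t, v(t)) - f(t, y(t)) = \bigl[g(t, v(t)) - g(t, y(t))\bigr] + \bigl[g(t, y(t)) - f(t, y(t))\bigr],
\]
the second bracket is nonnegative because $f \le g$ on $A$, while the first is governed by the Lipschitz continuity of $g$ in its second argument. On any compact subinterval $[t_0, T] \subset I$ the trajectories of $y$ and $v$ stay in a compact subset of $A$, so a single Lipschitz constant $L > 0$ applies there. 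The goal is then to show $w \ge 0$ on $[t_0, T]$; since $T$ is arbitrary, this yields $y(t) \le v(t)$ for all $t \in I$ with $t \ge t_0$.

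First I would run the classical perturbation argument, which cleanly avoids the degenerate case at a contact point. For $\varepsilon > 0$ let $v_\varepsilon$ solve the perturbed Cauchy problem $v_\varepsilon' = g(t, v_\varepsilon) + \varepsilon$ with $v_\varepsilon(t_0) = v_0 + \varepsilon$; for $\varepsilon$ small these exist on $[t_0, T]$ by continuous dependence. I claim $v_\varepsilon > y$ throughout. Indeed $v_\varepsilon(t_0) = v_0 + \varepsilon > y_0$, and if the strict inequality first failed at some $t_* > t_0$ then $v_\varepsilon(t_*) = y(t_*)$, whence
\[
(v_\varepsilon - y)'(t_*) = g(t_*, y(t_*)) + \varepsilon - f(t_*, y(t_*)) \ge \varepsilon > 0,
\]
which is impossible since $v_\varepsilon - y$ decreases to $0$ as $t \uparrow t_*$. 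Having established $y < v_\varepsilon$ for every $\varepsilon > 0$, I would let $\varepsilon \to 0^+$ and invoke continuous dependence of solutions on the parameter and on the initial data, which the Lipschitz hypothesis on $g$ guarantees, to obtain $v_\varepsilon \to v$ uniformly on $[t_0, T]$ and hence $y \le v$.

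I expect the main obstacle to be the justification of the limit $\varepsilon \to 0^+$: one must ensure that the perturbed solutions $v_\varepsilon$ are defined on the full compact interval $[t_0, T]$ for all small $\varepsilon$ and converge uniformly to $v$, which is exactly where the Lipschitz regularity of $g$ is essential. Everything else is elementary. As a self-contained alternative that avoids appealing to continuous dependence, one can work directly with the negative part $p(t) = \max\{-w(t), 0\}$: it is Lipschitz with $p(t_0) = 0$, and the differential inequality $w' \ge -L\lvert w\rvert$ forces $p' \le Lp$ almost everywhere, so Gronwall's inequality gives $p \equiv 0$, i.e.\ $w \ge 0$.
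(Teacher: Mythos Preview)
The paper does not supply its own proof of this theorem: it is stated in the Appendix purely as a quoted result, with the proof delegated to the cited reference \cite{budincevic2010comparison}. So there is no in-paper argument to compare against.

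Your proposal is correct and is essentially the classical proof of the scalar comparison theorem. The perturbation argument---pushing $v$ strictly above $y$ via $v_\varepsilon' = g(t,v_\varepsilon)+\varepsilon$, $v_\varepsilon(t_0)=v_0+\varepsilon$, ruling out a first contact by a sign contradiction on the derivative, and then passing to the limit by continuous dependence---is exactly the standard route, and your identification of the Lipschitz hypothesis as what guarantees both uniqueness and the uniform convergence $v_\varepsilon\to v$ on compacts is on point. The alternative you sketch via the negative part $p=\max\{-w,0\}$ and Gronwall is also sound and has the advantage of being entirely self-contained. Either version would serve as a complete proof; the paper simply chose to quote the result rather than reprove it.
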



\section*{Acknowledgements}

This work was partially supported by FCT through the
R\&D unit CIDMA, reference UID/MAT/04106/2019,
and by project PTDC/EEI-AUT/2933/2014 (TOCCATA).
The authors would like to thank two Reviewers for their 
critical remarks and precious suggestions, which helped 
them to improve the quality and clarity of the manuscript. 




\begin{thebibliography}{99}

\bibitem{beretta2001global}
E. Beretta, T. Hara, W. Ma\ and\ Y. Takeuchi,
Global asymptotic stability of an $SIR$ epidemic model with distributed time delay,
Nonlinear Anal. {\bf 47} (2001), no.~6, 4107--4115.

\bibitem{budincevic2010comparison}
M. Budin\v{c}evi\'{c}, 
A comparison theorem of differential equations, 
Novi Sad J. Math. {\bf 40} (2010), no.~1, 55--56. 

\bibitem{enatsu2012global}
Y. Enatsu, E. Messina, Y. Nakata, Y. Muroya, E. Russo\ and\ A. Vecchio,
Global dynamics of a delayed SIRS epidemic model with a wide class of nonlinear incidence rates,
J. Appl. Math. Comput. {\bf 39} (2012), no.~1-2, 15--34.

\bibitem{MR2921926}
Y. Enatsu, Y. Nakata\ and\ Y. Muroya,
Global stability of SIRS epidemic models with
a class of nonlinear incidence rates and distributed delays,
Acta Math. Sci. Ser. B (Engl. Ed.) {\bf 32} (2012), no.~3, 851--865.

\bibitem{Georgescu2013ALF}
P. Georgescu\ and\ H. Zhang,
A Lyapunov functional for a SIRI model
with nonlinear incidence of infection and relapse,
Appl. Math. Comput. {\bf 219} (2013), no.~16, 8496--8507.

\bibitem{hale}
J. K. Hale,
{\it Ordinary differential equations},
Wiley-Interscience, New York, 1969.

\bibitem{hale2}
J. Hale,
{\it Theory of functional differential equations},
second edition, Springer-Verlag, New York, 1977.

\bibitem{korobeinikov2006lyapunov}
A. Korobeinikov,
Lyapunov functions and global stability for SIR and SIRS
epidemiological models with non-linear transmission,
Bull. Math. Biol. {\bf 68} (2006), no.~3, 615--626.

\bibitem{korobeinikov2004lyapunov}
A. Korobeinikov\ and\ P. K. Maini,
A Lyapunov function and global properties for SIR and SEIR
epidemiological models with nonlinear incidence,
Math. Biosci. Eng. {\bf 1} (2004), no.~1, 57--60.

\bibitem{MR1218880}
Y. Kuang,
{\it Delay differential equations with applications in population dynamics},
Mathematics in Science and Engineering,
191, Academic Press, Inc., Boston, MA, 1993.

\bibitem{lasalle1976stability}
J. P. LaSalle,
{\it The stability of dynamical systems},
Society for Industrial and Applied Mathematics,
Philadelphia, PA, 1976.

\bibitem{tagkey1961iii}
J. LaSalle\ and\ S. Lefschetz,
{\it Stability by Liapunov's direct method, with applications},
Mathematics in Science and Engineering,
Vol.~4, Academic Press, New York, 1961.

\bibitem{LI20123696}
C.-H. Li, C.-C. Tsai\ and\ S.-Y. Yang,
Analysis of the permanence of an SIR epidemic model
with logistic process and distributed time delay,
Commun. Nonlinear Sci. Numer. Simul. {\bf 17} (2012), no.~9, 3696--3707.

\bibitem{MR3750672}
Q. Liu, D. Jiang, T. Hayat\ and\ B. Ahmad,
Stationary distribution and extinction of a stochastic
SIRI epidemic model with relapse,
Stoch. Anal. Appl. {\bf 36} (2018), no.~1, 138--151.

\bibitem{MyID:382}
J. P. Mateus, P. Rebelo, S. Rosa, C. M. Silva\ and\ D. F. M. Torres,
Optimal control of non-autonomous SEIRS models with vaccination and treatment,
Discrete Contin. Dyn. Syst. Ser. S {\bf 11} (2018), no.~6, 1179--1199.
{\tt arXiv:1706.06843}

\bibitem{Moreira}
H. N. Moreira\ and\ Y. Wang,
Global stability in an $S\to I\to R\to I$ model,
SIAM Rev. {\bf 39} (1997), no.~3, 496--502.

\bibitem{Sampath}
B. G. S. A. Pradeep, W. Ma\ and\ W. Wang,
Stability and Hopf bifurcation analysis of an SEIR model with nonlinear incidence rate and relapse,
J. Stat. Manag. Syst. {\bf 20} (2017), no.~3, 483--497.

\bibitem{MR3703345}
A. Rachah\ and\ D. F. M. Torres,
Analysis, simulation and optimal control of a SEIR model
for Ebola virus with demographic effects,
Commun. Fac. Sci. Univ. Ank. S\'er. A1 Math. Stat. {\bf 67} (2018), no.~1, 179--197.
{\tt arXiv:1705.01079}

\bibitem{MR3810808}
M. Sekiguchi, E. Ishiwata\ and\ Y. Nakata,
Dynamics of an ultra-discrete SIR epidemic model with time delay,
Math. Biosci. Eng. {\bf 15} (2018), no.~3, 653--666.

\bibitem{yasuhiro2000global}
Y. Takeuchi, W. Ma\ and\ E. Beretta,
Global asymptotic properties of a delay $SIR$ epidemic model
with finite incubation times,
Nonlinear Anal. {\bf 42} (2000), no.~6, Ser. A: Theory Methods, 931--947.

\bibitem{Tudor}
D. Tudor,
A deterministic model for herpes infections in human and animal populations,
SIAM Rev. {\bf 32} (1990), no.~1, 136--139.

\bibitem{article}
P. van den Driessche, L. Wang\ and\ X. Zou,
Modeling diseases with latency and relapse,
Math. Biosci. Eng. {\bf 4} (2007), no.~2, 205--219.

\bibitem{VANDENDRIESSCHE200229}
P. van den Driessche\ and\ J. Watmough,
Reproduction numbers and sub-threshold endemic equilibria
for compartmental models of disease transmission,
Math. Biosci. {\bf 180} (2002), 29--48.

\bibitem{vargas2014global}
C. Vargas-De-Le\'on,
On the global stability of infectious diseases models with relapse,
Abstraction \& Application {\bf 9} (2013), 50--61.

\bibitem{Xu}
R. Xu,
Global dynamics of a delayed epidemic model with latency and relapse,
Nonlinear Anal. Model. Control {\bf 18} (2013), no.~2, 250--263.

\bibitem{ZhangHong}
H. Zhang, J. Xia\ and\ P. Georgescu,
Multigroup deterministic and stochastic {\it SEIRI}
epidemic models with nonlinear incidence rates
and distributed delays: a stability analysis,
Math. Methods Appl. Sci. {\bf 40} (2017), no.~18, 6254--6275.

\end{thebibliography}
\end{document}